\newcommand{\R}{\mathbb{R}}
\newcommand{\Hy}{\mathbb{H}}
\def\Xint#1{\mathchoice
{\XXint\displaystyle\textstyle{#1}}%
{\XXint\textstyle\scriptstyle{#1}}%
{\XXint\scriptstyle\scriptscriptstyle{#1}}%
{\XXint\scriptscriptstyle\scriptscriptstyle{#1}}%
\!\int}
\def\XXint#1#2#3{{\setbox0=\hbox{$#1{#2#3}{\int}$ }
\vcenter{\hbox{$#2#3$ }}\kern-.6\wd0}}
\def\dashint{\Xint-}
\author{Brian Allen}
\date{Spring 2018}
\title{Stability of the PMT and RPI for Asymptotically Hyperbolic Manifolds Foliated by IMCF}
\newtheorem{Thm}{Theorem}[section]
\newtheorem{Cor}[Thm]{Corollary}
\newtheorem{Prop}[Thm]{Proposition}
\newtheorem{Lem}[Thm]{Lemma}
\newtheorem{Def}[Thm]{Definition}
\begin{document}

\maketitle

\begin{abstract}
We study the stability of the Positive Mass Theorem (PMT) and the Riemannian Penrose Inequality (RPI) in the case where a region of an asymptotically hyperbolic manifold $M^3$ can be foliated by a smooth solution of Inverse Mean Curvature Flow (IMCF) which is uniformly controlled. We consider a sequence of regions of asymptotically hyperbolic manifolds $U_T^i\subset M_i^3$, foliated by a smooth solution to IMCF which is uniformly controlled, and if $\partial U_T^i = \Sigma_0^i \cup \Sigma_T^i$ and $m_H(\Sigma_T^i) \rightarrow 0$ then $U_T^i$ converges to a topological annulus portion of hyperbolic space with respect to $L^2$ metric convergence. If instead $m_H(\Sigma_T^i)-m_H(\Sigma_0^i) \rightarrow 0$ and $m_H(\Sigma_T^i) \rightarrow m >0$ then we show that $U_T^i$ converges to a topological annulus portion of the Anti-de~Sitter Schwarzschild metric with respect to $L^2$ metric convergence.
\end{abstract}

\section{Introduction}\label{sec:intro}
If we consider a complete, asymptotically Hyperbolic manifold, $M^3$, with scalar curvature $R \ge -6$ then the Positive Mass Theorem (PMT) in the asymptotically hyperbolic case says that $M^3$ has positive mass. The rigidity statement says that if the mass is $0$ then $M$ is isometric to hyperbolic space. Similarly, the Riemannian Penrose Inequality in the asymptotically hyperbolic case says that if $\partial M$ consists of a surface $\Sigma_0$ with $H=2$ then
\begin{align}
 m_{ADM}(M) \ge \sqrt{\frac{|\Sigma_0|}{16 \pi}}
\end{align}
where $|\Sigma_0|$ is the area of $\Sigma_0$. 
In the case of equality $M$ is isometric to the Anti-de~Sitter Schwarzschild metric. In this paper we are concerned with the stability of these two rigidity statements in the case where we can foliate a region of $M$ by a smooth solution of Inverse Mean Curvature Flow (IMCF) that is uniformly controlled.

The stability problem for the PMT  and RPI in the asymptotically flat case has been studied by Lee \cite{L}, Lee and Sormani \cite{LS1,LS2}, Huang, Lee and Sormani \cite{HLS},  LeFloch and Sormani \cite{LeS},  Finster \cite{F}, Finster and Bray \cite{BF},  Finster and Kath  \cite{FK}, and by Corvino \cite{C} (See \cite{BA2} for a further discussion of these results).  In addition, the author \cite{BA2} has recently shown how to use  IMCF to prove stability of the PMT and RPI for asymptotically flat manifolds which are foliated by IMCF, obtaining $L^2$ metric convergence to Euclidean space or the Schwarzschild metric. The goal of this paper is to extend the results of \cite{BA2} to the asymptotically hyperbolic case.

Previously, Dahl, Gicquad and Sakovich have adapted the stability of the asymptotically flat PMT results of Lee \cite{L} to manifolds which are conformal to hyperbolic space outside a compact set, overcoming some new issues arising in the asymptotically hyperbolic case. The question of what can happen inside this compact set is still open and recently Sakovich and Sormani \cite{SS} extended the results of Lee and Sormani \cite{LS2} to the PMT in the asymptotically hyperbolic case for rotationally symmetric manifolds, showing intrinsic flat convergence to an annular region of Hyperbolic space.

With the application of stability in mind, one should think of IMCF as providing good coordinates for our manifold $M_j^3$ which is analagous to the rotationally symmetric coordinates used in \cite{LS2,LS1,SS}. These coordinates allow us to express the metric in an advantageous form, see equation \eqref{metricexpression}, which leads to arguing that the metric converges to hyperbolic space or the Anti-de~Sitter Schwarzschild metric using estimates on geometric quantities under IMCF. In \cite{BA4} the author shows existence of IMCF coordinates on metrics which are conformal to warped products with particular analytic assumptions on the warping and conformal factor which when combined with the results of this paper immediately imply stability results.

In \cite{HI}, Huisken and Ilmanen show how to use weak solutions of IMCF in order to prove the PMT for asymptotically flat Riemanian manifolds as well as the RPI in the case of a connected boundary. Later Neves \cite{AN}, and Hung and Wang \cite{HW} showed that IMCF does not have strong enough convergence properties to extend the proof of Huisken and Ilmanen to the asymptotically hyperbolic case. Despite this fact, IMCF has still been used to prove important geometric inequalities for asymptotically hyperbolic manifolds by Brendle \cite{Br}, Brendle, Hung and Wang \cite{BHW}, and de Lima and Girao \cite{DG}, to name a few. In this work we will be able to use IMCF in the asymptotically hyperbolic setting to prove a stability result by stating stability in terms of the Hawking mass, as done in the author's previous work \cite{BA2}.

We remember that IMCF is defined for surfaces $\Sigma^n \subset M^{n+1}$ evolving through a one parameter family of embeddings $F: \Sigma \times [0,T] \rightarrow M$, $F$ satisfying inverse mean curvature flow,

\begin{equation}\label{IMCF}
\begin{cases}
\frac{\partial F}{\partial t}(p,t) = \frac{\nu(p,t)}{H(p,t)}  &\text{ for } (p,t) \in \Sigma \times [0,T),
\\ F(p,0) = \Sigma_0  &\text{ for } p \in \Sigma ,
\end{cases}
\end{equation}
where $H$ is the mean curvature of $\Sigma_t := F_t(\Sigma)$ and $\nu$ is the outward pointing normal vector. The outward pointing normal vector will be well defined in our case since we have in mind considering $M^3$ to by asymptotically hyperbolic manifolds with one end.

For a glimpse of long time existence and asymptotic analysis results for smooth IMCF in various ambient manifolds see  \cite{BA,BA4,QD, CG1,CG2,S,S2,U,Z}. For our purposes here the results of Scheuer \cite{S, S2} and the author \cite{BA4} are particularly significant. In \cite{S, S2} Scheuer gives long time existence and asymptotic analysis results for rotationally symmetric metrics with non-positive radial curvature and later generalizing to warped products. The asymptotic results therein imply that after rescaling you will find that $\tilde{\Sigma}_t$ converges to a $C^{2,\alpha}$ hypersurface but you cannot conclude it is a sphere, as expected for aymptotically hyperbolic spaces. It is for this reason that we will try to impose the mildest conditions possible on a long time solution of IMCF to achieve stability when stating Corollaries \ref{PMTCOR} and \ref{RPICOR}. In particular the asymptotic conditions assumed here are compatible with the conditions imposed in the author's work \cite{BA4} where long time existence of IMCF is shown on metrics conformal to warped products.

Now the class of regions of asymptotically Hyperbolic manifolds to which we will be proving stability of the PMT and RPI is defined. Note that we have in mind foliations of asymptotically hyperbolic manifolds by IMCF but we will not need to assume that $M$ is asymptotically hyperbolic manifold to state our desired theorems. Afterwards, when we state Corollaries \ref{PMTCOR} and \ref{RPICOR} in the case where we assume a long time solution of IMCF then we will define asymptotically hyperbolic manifolds as well as other necessary definitions for this setting.

As far as notation is concerned, if we have $\Sigma^2$ a surface in a Riemannian manifold, $M^3$, we will denote the induced metric, mean curvature, second fundamental form, principal curvatures, Gauss curvature, area, Hawking mass and Neummann isoperimetric constant as $g$, $H$, $A$, $\lambda_i$, $K$, $|\Sigma|$, $m_H(\Sigma)$, $IN_1(\Sigma)$, respectively. We will denote the Ricci curvature, scalar curvature, and sectional curvature tangent to $\Sigma$ as $Rc$, $R$, $K_{12}$, respectively.

\begin{Def} \label{IMCFClass} Define the class of manifolds with boundary foliated by IMCF as follows
\begin{align*}
\mathcal{M}_{r_0,H_0,I_0}^{T,H_1,A_1}:=\{ U_T \subset M,  R \ge -6|&\exists \Sigma \subset M \text{compact, connected surface such that } 
\\& IN_1(\Sigma) \ge I_0, m_H(\Sigma) \ge 0 \text{,and } |\Sigma|=4\pi r_0^2. 
\\ &\exists \Sigma_t \text{ smooth solution to IMCF, s.t. }\Sigma_0=\Sigma,
\\& H_0 \le H(x,t) \le H_1 < \infty, |A|(x,t) \le A_1 \text{ for } t \in [0,T],
\\&\text{and } U_T = \{x\in\Sigma_t: t \in [0,T]\} \}
\end{align*}
where $0 < H_0 < H_1 < \infty$, $0 < I_0,A_1,r_0 < \infty$ and $0 < T < \infty$.
\end{Def}

\textbf{Note:} The regions defined above could be obtained from a weak solution of IMCF (see \cite{HI}) where the region $U_T$ lies in between two jump times of the weak flow since Huisken and Ilmanen showed that weak solutions are smooth between jump times. Also, $U_T$ could be a region of a manifold which is conformal to a warped product which falls under the assumptions of the author's work \cite{BA4}. The author also believes that long time existence of IMCF starting from a coordinate sphere in a nearly rotationally symmetric manifold should be true which would imply stability for these manifolds and would lead to an extension of the work by Sakovich and Sormani \cite{SS}.

Before we state the stability theorems we define some metrics on $\Sigma \times [0,T]$ that will be used throughout this document:
\begin{align}
\bar{g} &= \frac{1}{4}\left(1+ \frac{e^{-t}}{r_0^2} \right)^{-1}dt^2 + r_0^2e^t \sigma,
\\ g_{AdSS} &= \frac{1}{4}\left (\frac{1}{r_0^2}e^{-t} - \frac{2}{r_0^3} m e^{-3t/2}+1 \right )^{-1} dt^2 + r_0^2e^t \sigma,
\\ \hat{g}^i&=\frac{1}{H(x,t)^2}dt^2 + g^i(x,t), \label{metricexpression}
\end{align}
where $\sigma$ is the round metric on $\Sigma$ and $g^i(x,t)$ is the metric on $\Sigma_t^i$. The first metric is the metric of hyperbolic space $\Hy^3$, the second is the Anti-de~Sitter Schwarzschild metric and the third is the metric on $U_T^i$ with respect to the foliation by IMCF. The first two can be verified by defining $s = r_0 e^{t/2}$ then $ds^2 = \frac{r_0^2}{4}e^t dt^2$, $\hat{g} = \frac{1}{1+s^2}ds^2 + s^2 d\sigma$ which is isometric to Hyperbolic space and $g_{AdSS} = \frac{1}{1 - \frac{2m}{s}+s^2}ds^2 + s^2 d\sigma$ which is isometric to the Anti-de~Sitter Schwarzschild space.

\begin{Thm}\label{SPMT}
Let $U_{T}^i \subset M_i^3$ be a sequence such that $U_{T}^i\subset \mathcal{M}_{r_0,H_0,I_0}^{T,H_1,A_1}$ and $m_H(\Sigma_{T}^i) \rightarrow 0$ as $i \rightarrow \infty$.  

If we assume one of the following conditions,
\begin{align}
& \exists I > 0 \text{ so that } K_{12}^i \ge -1 \text{ on } \Sigma_0 \text{ and diam }(\Sigma_0^i) \le D \text{ } \forall i \ge I \label{cond 1},
\\&  \exists  [a,b]\subset [0,T] \text{ s.t. } \| Rc^i(\nu,\nu)\|_{W^{1,2}(\Sigma\times [a,b])} \le C \text{ and }
\\& \hspace{1cm} \text{diam}(\Sigma_t^i) \le D \text{ } \forall i \text{, }t\in [a,b],
\end{align}
where $W^{1,2}(\Sigma\times [a,b])$ is defined with respect to $\delta$, then
\begin{align}
\hat{g}^i \rightarrow \bar{g}
\end{align}
in $L^2$ with respect to $\bar{g}$.
\end{Thm}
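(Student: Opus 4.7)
The plan is to adapt the IMCF-based stability argument from the author's asymptotically flat work \cite{BA2} to the asymptotically hyperbolic setting, built around the hyperbolic Hawking mass
\begin{equation*}
m_H(\Sigma) = \sqrt{\frac{|\Sigma|}{16\pi}}\left(1 + \frac{|\Sigma|}{4\pi} - \frac{1}{16\pi}\int_\Sigma H^2\,dA\right)
\end{equation*}
and its monotonicity under IMCF when $R \geq -6$. Since the class $\mathcal{M}_{r_0,H_0,I_0}^{T,H_1,A_1}$ forces $m_H(\Sigma_0^i) \geq 0$ and we assume $m_H(\Sigma_T^i) \to 0$, monotonicity gives $m_H(\Sigma_t^i) \to 0$ for every $t \in [0,T]$. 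Integrating the monotonicity formula over $[0,T]$ forces its non-negative integrand to vanish in $L^1(\Sigma \times [0,T])$, yielding in particular
\begin{align*}
\int_0^T \!\!\! \int_{\Sigma_t^i} \frac{|\nabla H^i|^2}{(H^i)^2}\,dA\,dt \to 0, \quad \int_0^T \!\!\! \int_{\Sigma_t^i} (\lambda_1^i - \lambda_2^i)^2\,dA\,dt \to 0, \quad \int_0^T \!\!\! \int_{\Sigma_t^i} (R^i + 6)\,dA\,dt \to 0.
\end{align*}

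Next I identify the lapse. Because $|\Sigma_t^i| = 4\pi r_0^2 e^t$ is fixed along IMCF by the initial area hypothesis, vanishing of $m_H(\Sigma_t^i)$ forces $\int_{\Sigma_t^i}(H^i)^2\,dA \to 16\pi(1 + r_0^2 e^t)$. Combining this with the Neumann Poincar\'e inequality on $\Sigma_t^i$ (with constant controlled by $IN_1 \geq I_0$ propagated along IMCF via $H_0 \le H^i \le H_1$ and $|A^i| \le A_1$, as in \cite{BA2}) and the smallness of $\|\nabla H^i\|_{L^2}$, I conclude that $H^i(x,t) \to 2\sqrt{1 + (r_0^2 e^t)^{-1}}$ in $L^2(\Sigma \times [0,T])$, giving $(H^i)^{-2} \to \tfrac{1}{4}(1 + e^{-t}/r_0^2)^{-1}$, exactly the hyperbolic lapse.

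For the spatial part I use the IMCF evolution $\partial_t g^i = 2A^i/H^i = g^i + 2(A^i - (H^i/2)g^i)/H^i$. Since $|A^i - (H^i/2)g^i|^2$ is pointwise comparable to $(\lambda_1^i - \lambda_2^i)^2$ and $H^i$ is bounded below, the problem reduces to showing $g^i(\cdot,0) \to r_0^2 \sigma$ in $L^2$. Under hypothesis \eqref{cond 1}, the Gauss equation $K^i = K_{12}^i + ((H^i)^2 - (\lambda_1^i - \lambda_2^i)^2)/4$ together with $K_{12}^i \ge -1$, the uniform $H$ bounds, and umbilic-defect smallness gives an almost-constant positive intrinsic Gauss curvature on $\Sigma_0^i$; combined with the diameter bound this yields round-sphere convergence via a Shiohama/Bonnet--Myers comparison in the spirit of \cite{BA2}. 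Under the Ricci-curvature hypothesis, the $W^{1,2}$ control on $Rc^i(\nu,\nu)$ together with $L^1$ smallness of $R^i+6$ and $(\lambda_1^i - \lambda_2^i)^2$ yields analogous intrinsic Gauss-curvature control on $\Sigma \times [a,b]$, and the evolution estimate above then propagates the almost-round metric back to $t=0$.

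The main obstacle I anticipate is this last step: passing from integrated smallness of curvature defects to $L^2$-closeness of the intrinsic slice metrics to round spheres. The asymptotically hyperbolic background has non-trivial ambient curvature, so unlike in \cite{BA2} one cannot rescale the problem to a nearly Euclidean one, and the constants hidden in the hyperbolic Hawking mass formula and in $R + 6$ must be tracked carefully through the Gauss equation. Nondegeneracy with $H$ bounded away from $0$ is essential when dividing by $H$ in the evolution and in the limiting lapse, which is exactly why the class $\mathcal{M}_{r_0,H_0,I_0}^{T,H_1,A_1}$ imposes $H_0 > 0$.
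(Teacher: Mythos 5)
Your proposal tracks the paper's strategy closely through the early stages: monotonicity of the hyperbolic Hawking mass forces the curvature-defect integrals $\int \frac{|\nabla H|^2}{H^2}$, $\int (\lambda_1 - \lambda_2)^2$, and $\int (R + 6)$ to vanish (this is Lemma \ref{CrucialEstimate} and Corollary \ref{GoToZero}); the Neumann--Poincar\'e inequality with the $I_0$, $H_0$, $H_1$, $A_1$ bounds controls the lapse (Proposition \ref{avgH}); and the IMCF evolution of the induced metric combined with umbilic-defect smallness reduces the problem to showing $g^i(\cdot,0)$ is nearly $r_0^2\sigma$ (Section 3). However, the final and crucial step of your argument is wrong. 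Passing from ``almost-constant positive intrinsic Gauss curvature in $L^1$ or $L^2$'' to ``$(\Sigma,g^i(\cdot,0))$ is metrically close to a round sphere'' is not accomplished by ``a Shiohama/Bonnet--Myers comparison'': Bonnet--Myers gives diameter control from a pointwise Ricci lower bound, and Shiohama's theorem is a topological sphere theorem under pointwise pinching; neither produces $C^\alpha$ or $L^2$ closeness of the \emph{metric} to a constant-curvature metric from an \emph{integral} curvature defect. The paper instead invokes the almost-rigidity theorem of Petersen and Wei (Corollary \ref{rigidity}, from \cite{PW}), which states that if the $L^p$ ($p>n/2$) norm of $R - \lambda g\circ g$ is small and one has diameter and volume bounds, then the metric is $C^\alpha$-close to a constant-curvature metric. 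This is precisely the quantitative almost-rigidity theorem that converts integral Gauss-curvature control into metric closeness, and without it or an equivalent tool your argument does not close.

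A second gap is in your treatment of the Ricci-curvature hypothesis. You assert that $W^{1,2}$ control of $Rc^i(\nu,\nu)$ together with $L^1$ smallness of $R^i+6$ and the umbilic defect ``yields analogous intrinsic Gauss-curvature control on $\Sigma\times[a,b]$,'' but you do not explain the mechanism. The difficulty is that Corollary \ref{GoToZero} only gives $\int_{\Sigma_t^i}(Rc^i(\nu,\nu)+2)\,d\mu \to 0$ (an $L^1$-type convergence with no sign control), whereas the Petersen--Wei hypothesis needs an $L^2$ bound on $K^i - 1/r_0^2$. The paper bridges this by using Rellich--Kondrachov: the uniform $W^{1,2}(\Sigma\times[a,b])$ bound yields strong $L^2$ convergence of $Rc^i(\nu,\nu)$ along a subsequence, and the weak-limit formula of Lemma \ref{WeakRicciEstimate} identifies the limit as $-2$, so $\|Rc^i(\nu,\nu)+2\|_{L^2(\Sigma\times[a,b])}\to 0$; only then does $\|K^i - 1/r_0^2\|_{L^2(\Sigma_{t'})}\to 0$ follow for a.e.~$t'$. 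This compactness-plus-weak-limit-identification step is the substantive content of hypothesis (2) and is missing from your sketch. In the $K_{12}^i\ge -1$ case the analogous point is that the pointwise lower bound is what upgrades $\int(K_{12}^i+1)\,d\mu\to 0$ to a.e.~pointwise convergence (since the integrand is nonnegative), which you use implicitly but should make explicit before feeding the resulting $L^2$ bound into the rigidity theorem.
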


\begin{Thm}\label{SRPI}
Let $U_{T}^i \subset M_i^3$ be a sequence such that $U_{T}^i\subset \mathcal{M}_{r_0,H_0,I_0}^{T,H_1,A_1}$, $m_H(\Sigma_{T}^i)- m_H(\Sigma_0^i) \rightarrow 0$ and $m_H(\Sigma_0) \rightarrow m > 0$ as $i \rightarrow \infty$. 

If we assume the following condition,
\begin{align}
&  \exists  [a,b]\subset [0,T] \text{ s.t. } \| Rc^i(\nu,\nu)\|_{W^{1,2}(\Sigma\times [a,b])} \le C \text{ and }
\\& \hspace{1cm} \text{diam}(\Sigma_t^i) \le D \text{ } \forall i \text{, }t\in [a,b],
\end{align}
where $W^{1,2}(\Sigma\times [a,b])$ is defined with respect to $\delta$, then
\begin{align}
\hat{g}^i \rightarrow g_{AdSS}
\end{align}
in $L^2$ with respect to $g_{AdSS}$.
\end{Thm}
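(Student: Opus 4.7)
The plan is to mirror the architecture for Theorem \ref{SPMT}, modified to handle a nonzero limiting Hawking mass. The starting point is the Geroch monotonicity formula for $m_H$ along smooth IMCF with $R\ge -6$, which expresses $m_H(\Sigma_T^i) - m_H(\Sigma_0^i)$ as a spacetime integral of a positive combination of $|\nabla H|^2/H^2$, the umbilicity defect $(\lambda_1-\lambda_2)^2$, and $R+6$, weighted by powers of $|\Sigma_t|$ that are controlled above and below inside $\mathcal{M}^{T,H_1,A_1}_{r_0,H_0,I_0}$. Since the hypothesis forces this difference to zero, each of these three non-negative integrands tends to zero in $L^1(\Sigma\times[0,T])$; moreover, monotonicity combined with the endpoint convergences $m_H(\Sigma_T^i)\to m$ and $m_H(\Sigma_0^i)\to m$ upgrades the endpoint statement to $m_H(\Sigma_t^i)\to m$ uniformly in $t\in[0,T]$.

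The uniform convergence $m_H(\Sigma_t^i)\to m$, combined with the Hawking mass formula and the IMCF area identity $|\Sigma_t^i| = 4\pi r_0^2 e^t$, directly yields that $\int_{\Sigma_t^i} H^2\,dA$ converges for each $t$ to $16\pi(1 + r_0^2 e^t - 2m/(r_0 e^{t/2}))$, which is precisely $\int_{\Sigma_t^i} H_{AdSS}^2\,dA$. Gauss--Bonnet plus the Gauss equation $K = \tfrac14(H^2-(\lambda_1-\lambda_2)^2) + K_{12}$, together with the $L^1$ decay of $(\lambda_1-\lambda_2)^2$ and $R+6$, translates this into convergence of $\int K_{12}\,dA$ as well. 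This integral control on $H$ is then upgraded to $L^2$ convergence $H\to H_{AdSS}$ on the slab $[a,b]$ using the IMCF evolution $\partial_t H = -\Delta(1/H) - (|A|^2 + Rc(\nu,\nu))/H$: the $W^{1,2}$ bound on $Rc^i(\nu,\nu)$, the uniform bounds $H_0\le H\le H_1$ and $|A|\le A_1$, and the diameter bound supply Rellich compactness that pins down a limit, which must agree with $H_{AdSS}$ by the integral information, and the flow is then propagated to all of $[0,T]$ backwards and forwards.

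For the spatial factor, the IMCF evolution $\partial_t g^i = (2/H)A^i = g^i + (2/H)A_0^i$, where $A_0^i$ denotes the traceless part of the second fundamental form, combined with $A_0^i\to 0$ in $L^2$ yields $\partial_t g^i - g^i\to 0$ in $L^2$. Integrating the ODE in $t$ reduces matters to showing $g^i(\cdot,a)$ is $L^2$-close to the round metric $r_0^2 e^a\sigma$; the diameter bound on $\Sigma_a^i$, almost-umbilicity from the first step, and the $W^{1,2}$ regularity of $Rc^i(\nu,\nu)$ combine to yield subsequential geometric compactness whose limit is forced, by umbilicity and the preserved area $4\pi r_0^2 e^a$, to be exactly that round sphere. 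Combining the $L^2$ limits for $1/H^2$ and $g^i$ then delivers $\hat g^i\to g_{AdSS}$ in $L^2$ with respect to $g_{AdSS}$.

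The principal obstacle is this last step: converting averaged umbilicity plus the integral convergence of $\int H^2$ into genuine $L^2$ closeness of the full induced metric to a round metric, rather than merely convergence of averaged quantities. This is precisely why the $W^{1,2}$ assumption on $Rc^i(\nu,\nu)$ is imposed together with the diameter bound, providing just enough regularity on $[a,b]$ for the evolution equation for $H$ to yield classical compactness and force the limit to be the AdSS geometry of mass $m$ rather than a weaker object.
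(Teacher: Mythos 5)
Your high-level architecture matches the paper's: extract the decay of $\int|\nabla H|^2/H^2$, $\int(\lambda_1-\lambda_2)^2$, and $\int(R+6)$ from Geroch monotonicity, pin down the limiting mean curvature and Gauss curvature from the Hawking-mass normalization, and then upgrade averaged/integral information to $L^2$ metric convergence on the slices using the $W^{1,2}$ bound on $Rc^i(\nu,\nu)$ and the diameter bound. You also correctly flag the crux: converting almost-umbilicity plus integral $H^2$ control into genuine $L^2$ closeness of the induced metric to a round metric. However, there are two concrete places where your argument differs from the paper and where, as stated, it leaves a real gap.

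First, the route to $H^i\to H_{AdSS}$: you invoke the evolution equation $\partial_t H=-\Delta(1/H)-(|A|^2+Rc(\nu,\nu))/H$ together with the $W^{1,2}$ bound on $Rc^i(\nu,\nu)$ to claim Rellich compactness. But Rellich requires spatial $W^{1,2}$ control of $H$ on the slices, which the evolution equation does not supply — it only controls the time derivative. The paper instead works directly from $\int_{\Sigma_t}|\nabla H|^2/H^2\to 0$ combined with the uniform Neumann isoperimetric bound $IN_1(\Sigma)\ge I_0$ built into the class $\mathcal{M}^{T,H_1,A_1}_{r_0,H_0,I_0}$: a Poincar\'e inequality then forces $H_i-\bar H_i\to 0$ in $L^2(\Sigma_t)$ for a.e.\ $t$ (Proposition \ref{avgH}), and $\bar H_i$ is pinned down by the Hawking-mass identity (Lemma \ref{naiveEstimate}). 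Your proposal never uses the Poincar\'e/isoperimetric hypothesis, which is precisely the mechanism that makes this step work; without it the evolution-equation argument does not close.

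Second, the spatial factor: you ask for ``subsequential geometric compactness whose limit is forced, by umbilicity and the preserved area, to be exactly that round sphere.'' Almost-umbilicity plus a diameter bound and an $L^1$ bound on Gauss curvature does not by itself give metric convergence; one needs a quantitative almost-rigidity theorem. The paper supplies this via Petersen--Wei's integral-curvature pinching (Corollary \ref{rigidity}): the $W^{1,2}$ bound on $Rc^i(\nu,\nu)$, through Rellich together with the weak-limit identification in Lemma \ref{WeakRicciEstimate}, yields strong $L^2$ convergence of $Rc^i(\nu,\nu)+2$, hence of $K_{12}^i+1$ at some time slice, which combined with the $L^2$ control on $\lambda_1^i\lambda_2^i$ gives $\int_\Sigma(K^i-\mathrm{const})^2\to 0$; Petersen--Wei then produce $C^\alpha$ closeness of $g^i(\cdot,t')$ to the round metric, and the chain of metric comparisons $\hat g^i\to g_1^i\to g_2^i\to g_3^i$ from Section \ref{Sect-Conv} is what ties this single-slice information to the full $L^2$ statement for $\hat g^i$. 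You should make the role of Lemma \ref{WeakRicciEstimate} (identifying the weak limit of $Rc^i(\nu,\nu)$ so that the $W^{1,2}$-compactness limit is the right constant) and the Petersen--Wei rigidity result explicit; without naming these, the final step remains a heuristic rather than a proof.
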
 

\textbf{Note:} We will be able to show that for $i$ large enough $\Sigma_0^i$ is topologically a sphere and hence $U_T^i$ will be diffeomorphic to an annular region in $\R^3$. This motivates considering the metrics $\hat{g}, \delta, g_{ADSS}$ on $\Sigma\times [0,T]$.

The theorems above give stability for compact regions which are foliated by uniformly controlled solutions of IMCF. Now one can ask if this stability holds on an asymptotically hyperbolic end of a manifold which is completely foliated by IMCF. Can you satisfy the assumptions of Theorems \ref{SPMT} and \ref{SRPI} by taking advantage of the asymptotically hyperbolic end? We now move to answer this question by first stating the definition of an asymptotically hyperbolic manifold, as well as, what it means to be a uniformly asymptotically hyperbolic sequence of manifolds.

\begin{Def}\label{AH}
We say a complete, Riemannian manifold $(M^3,g)$ is an asymptotically hyperbolic manifold if there exists $K \subset M$, compact, so that $M \setminus K$ is diffeomorphic to $\R^3\setminus \overline{B}(0,1)$ and so that if we define the tensor,
\begin{align}
E=g - g_{\Hy},
\end{align}
where $g_{\Hy} = dr^2 + \sinh^2(|x|) d \sigma$, we have that
\begin{align}
|E_{ij}|+|E_{ij,k}|+|E_{ij,kl}|+|E_{ij,klm}|\le C e^{-\alpha |x|},\label{DerMetCond}
\end{align}
as $|x| \rightarrow \infty$ where $\alpha > 0$ and the norm/derivatives are taken with respect to $\delta$. If $\partial M \not = \emptyset$ then we require $\partial M$ to be an surface with $H = 2$.

We say a sequence of asymptotically hyperbolic manifolds $M_j=(M,g_j)$ is uniformly asymptotically hyperbolic if the constant $C$ above can be chosen uniformly for the sequence.
\end{Def}

In the author's paper \cite{BA2} for asymptotically flat manifolds it was perfectly reasonable to assume that the IMCF coordinates were asymptotically flat coordinates since we expect good convergence of long time solutions of IMCF in that case. In the hyperbolic case one does not want to make such a restrictive assumption and so we define now what it means for the IMCF to be uniformly compatible with the asymptotically hyperbolic coordinates.

\begin{Def}\label{IMCFcompatibleCoordsDef}
Let $(M^3, g)$ be an asymptotically hyperbolic manifold with induced radial coordinate $r \in [r_0,\infty)$ on the exterior region $M\setminus K$ where $K$ is a compact set. If $M\setminus K$ is also foliated by a solution to IMCF, $\Sigma_t$, then we say that the IMCF coordinates are compatible with the asymptotically hyperbolic coordinates if for each $t \in [0,\infty)$ we have,
\begin{align}
C_1 t  &\le r(x,t) \le  C_2 t, \label{rtBothBigAssumption}
\end{align}
where we let $r(x,t)$ represent the $r$ coordinate of $\Sigma_t$.
In addition, $\Sigma_t$ can be expressed as a graph over the coordinate ball $B_{r_0}=\{r=r_0\}$ with graph function $f$ so that,
\begin{align}
|\nabla^{S^2} f|\le C_3 \text{, for all } t \in [0,\infty).\label{UniformGraphAssumption}
\end{align}
We say a sequence of asymptotically hyperbolic manifolds $M_j = (M,g_j)$ is uniformly compatible with the IMCF coordinates if the constants $C_1, C_2, C_3$ can be chosen uniformly for the sequence.
\end{Def}

For the purposes of proving stability of the PMT or RPI using IMCF it is convenient to use the Hawking mass because of the fact that it is monotone along IMCF. For this reason we define a type of mass at infinity using the Hawking mass which is dependent on the initial hypersurface $\Sigma_0$. This mass at infinity will be used to state the stability corollaries below.

\begin{Def}\label{MassAtInfinityDef}
Let $\Sigma_t$ be a smooth solution to \eqref{IMCF} which exists for all $t \in [0,\infty)$ and so that $m_H(\Sigma_0) \ge 0$. Then we define the Hawking mass at infinity to be,
\begin{align}
m_H(\Sigma_{\infty}^i) = \displaystyle\lim_{t \rightarrow \infty} m_H(\Sigma_t^i).
\end{align}
\end{Def}
\textbf{Note:} It was shown by Andre Neves \cite{AN} that $m_H(\Sigma_{\infty}^i)$ can be bigger than or smaller than the true mass of an asymptotically hyperbolic manifold (for definition see \cite{CH, DGS, AN, W}) but nonetheless we will see that we can show stability if $m_H(\Sigma_{\infty}^i)\rightarrow 0$ in the corollaries below. In particular, if one chooses a smooth solution to IMCF which does converge to the true mass for each $i$ then the stability results will apply to this sequence.

\begin{Cor}\label{PMTCOR}
Assume for all $M_i$ the smooth solution of IMCF starting at $\Sigma_0$ exists for all time, so that for all $T \in (0, \infty)$, $U_{T,i}\subset \mathcal{M}_{r_0,H_0^T,I_0}^{T,H_1,A_1}$. In addition, assume that $\displaystyle m_H(\Sigma_{\infty}^i) \rightarrow 0$ as $i \rightarrow \infty$ and that $M_i$ are uniformly asymptotically hyperbolic and uniformly compatible with the IMCF coordinates then,
\begin{align}
\hat{g}^i \rightarrow \bar{g},
\end{align}
on $\Sigma\times[0,T]$ in $L^2$ with respect to $\bar{g}$.
\end{Cor}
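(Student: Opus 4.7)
My plan is to deduce Corollary~\ref{PMTCOR} from Theorem~\ref{SPMT} by verifying its Ricci/diameter hypothesis (the second alternative of the theorem) on a subinterval chosen deep in the asymptotic end, where the uniform asymptotic hyperbolicity does the work. The Hawking-mass input at time $T$ is immediate: Geroch monotonicity along IMCF (valid since $R_i \ge -6$), combined with $m_H(\Sigma_0^i) \ge 0$ built into the class $\mathcal{M}_{r_0,H_0,I_0}^{T,H_1,A_1}$ and the assumption $m_H(\Sigma_\infty^i) \to 0$, sandwiches $m_H(\Sigma_T^i)$ between $0$ and a vanishing sequence, giving $m_H(\Sigma_T^i) \to 0$ for every fixed $T$.

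For a given target $T$, I would apply Theorem~\ref{SPMT} with a larger horizon $T_\ast \ge T$, chosen so that $C_1 T_\ast$ is large enough for the decay of $E$ to be quantitatively small, and with subinterval $[a,b] := [T_\ast, T_\ast + 1]$; since $L^2$ convergence on $\Sigma \times [0, T_\ast+1]$ restricts to $\Sigma \times [0,T]$, this suffices. On $[a,b]$, the compatibility condition \eqref{rtBothBigAssumption} places every $\Sigma_t^i$ in the region $r \ge C_1 T_\ast$ of $M_i$. The uniform asymptotic-hyperbolic decay $|E_{ij}| + |E_{ij,k}| + |E_{ij,kl}| + |E_{ij,klm}| \le C e^{-\alpha|x|}$, combined with $Rc_{\Hy} = -2\, g_{\Hy}$, expands $Rc^i(\nu_i,\nu_i)$ as $-2$ plus an $O(e^{-\alpha C_1 T_\ast})$ error, and analogous expansions, using the uniform graph assumption \eqref{UniformGraphAssumption} to control derivatives of $\nu_i$ both tangentially and in $t$, bound its first $\delta$-derivatives uniformly in $i$. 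Integrating over the bounded-$\delta$-volume slab $\Sigma \times [a,b]$ then yields $\|Rc^i(\nu_i,\nu_i)\|_{W^{1,2}(\Sigma \times [a,b])} \le C$.

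The diameter bound follows from the graph description $\Sigma_t^i = \{(x, f_i(x,t))\}$ with $|\nabla^{S^2} f_i| \le C_3$ and $f_i(x,t) \le C_2 t$: since $g_i$ is close to $g_{\Hy}$ on $\{r \ge C_1 T_\ast\}$, the induced metric on $\Sigma_t^i$ is uniformly comparable to $\sinh^2(f_i)\sigma + df_i \otimes df_i$, whose intrinsic diameter for $t \in [a,b]$ is bounded in terms of $T_\ast$, $C_2$, and $C_3$ uniformly in $i$. With all hypotheses of Theorem~\ref{SPMT} verified on horizon $T_\ast + 1$, one concludes $\hat g^i \to \bar g$ in $L^2$ on $\Sigma \times [0, T_\ast + 1]$, which restricts to the claimed convergence on $\Sigma \times [0,T]$. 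The principal difficulty is the $W^{1,2}$ Ricci estimate: one has to rigorously expand $Rc^i(\nu_i, \nu_i)$ and its first derivatives in the $E$-perturbation, pull the resulting expression back through the IMCF map to the reference slab, and convert pointwise decay in ambient coordinates into a uniform $\delta$-$W^{1,2}$ bound, with the Jacobians and derivatives of $\nu_i$ kept under control via \eqref{UniformGraphAssumption}. The diameter step and the Hawking-mass sandwich are comparatively routine.
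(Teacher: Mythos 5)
Your proposal is essentially the paper's own argument for Corollary~\ref{PMTCOR}: use the compatibility condition~\eqref{rtBothBigAssumption} to locate a slab $[a,b]$ deep enough in the asymptotic end that the uniform decay~\eqref{DerMetCond} gives the $W^{1,2}$ bound on $Rc^i(\nu,\nu)$, use the graph condition~\eqref{UniformGraphAssumption} to get the uniform diameter bound, and then invoke Theorem~\ref{SPMT} (via Corollary~\ref{End1}) on a horizon $T_*$ at least as large as the target $T$. The only (minor) additions you make beyond the paper's proof are spelling out the Geroch-monotonicity sandwich that yields $m_H(\Sigma_T^i)\to 0$ for each fixed $T$, and noting explicitly that the induced graph metric in hyperbolic coordinates is comparable to $\sinh^2(f_i)\sigma + df_i\otimes df_i$; both are correct and consistent with what the paper implicitly uses.
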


\begin{Cor}\label{RPICOR}
Assume that for all $M_i$ the smooth solution of IMCF starting at $\Sigma_0$ exists for all time, so that $U_{T,i}\subset \mathcal{M}_{r_0,H_0^T,I_0}^{T,H_1,A_1}$ for all $T \in (0, \infty)$. In addition, assume that $m_H(\Sigma_{\infty}^i)- m_H(\Sigma_{0}^i) \rightarrow 0$, $m_H(\Sigma_{\infty}^i)\rightarrow m > 0$ as $i \rightarrow \infty$, and $M_i$ are uniformly asymptotically hyperbolic and uniformly compatible with the IMCF coordinates then,
\begin{align}
\hat{g}^i \rightarrow g_{AdSS},
\end{align}
on $\Sigma\times[0,T]$ in $L^2$ with respect to $g_{AdSS}$.
\end{Cor}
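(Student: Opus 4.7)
The plan is to reduce Corollary \ref{RPICOR} to Theorem \ref{SRPI} applied on $[0,T]$ for an arbitrary fixed $T > 0$. The containment $U_{T,i}\subset \mathcal{M}_{r_0,H_0^T,I_0}^{T,H_1,A_1}$ is given by hypothesis, so only the mass convergences and the $W^{1,2}$-plus-diameter hypothesis on some subinterval $[a,b]\subset[0,T]$ need to be verified. The mass hypotheses follow from Geroch monotonicity of the Hawking mass along smooth IMCF in the presence of $R\ge -6$: this gives
\begin{align*}
m_H(\Sigma_0^i)\le m_H(\Sigma_T^i)\le m_H(\Sigma_\infty^i),
\end{align*}
so that the assumption $m_H(\Sigma_\infty^i)-m_H(\Sigma_0^i)\to 0$ squeezes $m_H(\Sigma_T^i)-m_H(\Sigma_0^i)$ to zero, and combining with $m_H(\Sigma_\infty^i)\to m$ forces $m_H(\Sigma_0^i)\to m>0$, which are precisely the two mass hypotheses of Theorem \ref{SRPI}.

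To verify the $W^{1,2}$ bound on $Rc^i(\nu,\nu)$, I would choose $[a,b]\subset[0,T]$ with $a$ taken close to $T$ so that by \eqref{rtBothBigAssumption} every surface $\Sigma_t^i$ with $t\in[a,b]$ lies at radial coordinate $r\ge C_1 a$ in the asymptotically hyperbolic end, where the decay estimate \eqref{DerMetCond} controls $g_i-g_{\Hy}$ uniformly in $i$ together with its derivatives through third order. Since $Rc(\nu,\nu)$ is algebraic in the metric through second order, and $\nu$ is controlled by first derivatives of the graph function $f$ bounded uniformly by \eqref{UniformGraphAssumption}, one obtains a uniform $L^\infty$ bound on $Rc^i(\nu,\nu)$ on $[a,b]$. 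Tangential derivatives along $\Sigma_t^i$ and the $t$-derivative $\partial_t = \nu/H$ (with $H\ge H_0^T>0$) introduce at most third derivatives of $g_i$, still controlled uniformly by the AH constant. The diameter bound is automatic on $[a,b]$ since $r(x,t)\le C_2 b$ confines each $\Sigma_t^i$ to a bounded region of the AH end whose hyperbolic diameter depends only on $T$ and the compatibility constants.

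The main obstacle I anticipate is the careful interplay between the three parametrizations at play: the abstract IMCF parametrization $F^i(\cdot,t):\Sigma\to M_i$, the AH radial graph parametrization, and the reference product $\Sigma\times[0,T]$ equipped with the flat metric $\delta$ in which the $W^{1,2}$ norm is measured. Translating the pointwise asymptotic decay \eqref{DerMetCond} in AH coordinates into a $W^{1,2}$ bound on $Rc^i(\nu^i,\nu^i)$ as a function on $\Sigma\times[a,b]$ requires bi-Lipschitz control of every Jacobian linking these three descriptions, uniformly in $i$. This is precisely what the uniform compatibility conditions \eqref{rtBothBigAssumption}--\eqref{UniformGraphAssumption} are designed to guarantee, via the comparability of $r$ and $t$ on one side and the uniform $C^1$ control of the graph function on the other. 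Once this change-of-variables estimate is in place, Theorem \ref{SRPI} applies to each $[0,T]$ and yields $\hat{g}^i \to g_{AdSS}$ in $L^2$ with respect to $g_{AdSS}$ on $\Sigma\times[0,T]$, which is the statement of the corollary.
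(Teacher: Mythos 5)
Your approach matches the paper's: reduce to the compact-region theorem (Theorem \ref{SRPI}, via Corollary \ref{End2}) by using the uniform AH decay \eqref{DerMetCond} and the compatibility assumptions \eqref{rtBothBigAssumption}--\eqref{UniformGraphAssumption} to verify the $W^{1,2}$ Ricci bound and the diameter bound on a subinterval. Your explicit Geroch-monotonicity squeeze $m_H(\Sigma_0^i)\le m_H(\Sigma_T^i)\le m_H(\Sigma_\infty^i)$ to pass from the $\Sigma_\infty$-hypotheses to the $\Sigma_T$-hypotheses is a correct step that the paper leaves implicit, so this is a welcome addition.

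There is one small but genuine gap. You write that you would ``choose $[a,b]\subset[0,T]$ with $a$ taken close to $T$'' so that $r\ge C_1 a$ puts $\Sigma_t^i$ into the asymptotic regime where \eqref{DerMetCond} gives control. This only works if $C_1 a$ is actually large, i.e.\ if $T$ itself is large. For small $T$ the interval $[a,b]$ sits near the inner boundary where no decay estimate is available, and your verification of the $W^{1,2}$ hypothesis would not go through. The paper handles this by fixing a threshold $T_*$ (depending only on the compatibility constants) and running the argument on $[T-\epsilon,T]$ for $T\ge T_*$. To make your proof complete, you should do the same and then observe that for $T < T_*$ the claimed convergence follows a fortiori from the $T=T_*$ case, since
\begin{align}
\int_{\Sigma\times[0,T]} |\hat{g}^i - g_{AdSS}|_{g_{AdSS}}^2\, dV \le \int_{\Sigma\times[0,T_*]} |\hat{g}^i - g_{AdSS}|_{g_{AdSS}}^2\, dV \rightarrow 0.
\end{align}
With that bootstrapping step inserted, your argument is essentially the paper's.
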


\textbf{Note:} The author has proved \cite{BA4} long time existence results for IMCF in manifolds which are conformal to warped products with assumptions on the warping function and conformal factor. These long time existence results apply to asymptotically hyperbolic manifolds and when combined with Corollary \ref{PMTCOR} and Corollary \ref{RPICOR} should imply stability results for these manifolds.

In Section 2 we will use IMCF to get important estimates of the metric $\hat{g}$ in the foliated region $U_T^i\subset M_i$. The crucial estimates come from the calculation of the monotonicity of the Hawking mass in Lemma \ref{CrucialEstimate} which lead to integrals of geometric quantities converging to zero in Corollary \ref{GoToZero}. At the end of this section, the exact diffeomorphism that we are using to induce coordinates on the regions $U_T^i$ is discussed in Proposition \ref{AvgHEst} which is used implicitly throughout the rest of the paper.  

In Section 3 we use the estimates of the previous section to show convergence of $\hat{g}$ to a warped product $g_3^i(x,t) = \frac{1}{4}\left(1+ \frac{e^{-t}}{r_0^2} \right)^{-1}dt^2 + r_0^2e^t g^i(x,0)$ or $g_3^i(x,t) = \frac{1}{4}\left (\frac{1}{r_0^2}e^{-t} - \frac{2}{r_0^3} m e^{-3t/2}+1 \right )^{-1}dt^2 + r_0^2e^t g^i(x,0)$. This is done by showing convergence of $\hat{g}$ to simpler metrics, successively, until we get to $g_3^i$ and combining this chain of estimates by the triangle inequality. This proves Theorems \ref{SPMT} and \ref{SRPI} in the rotationally symmetric case since then we know that $\Sigma_t$ can be taken to be spheres.

In Section 4 we complete the proofs of Theorems \ref{SPMT} and \ref{SRPI} by showing convergence of $g_3^i$ to $\bar{g}$ or $g_{AdSS}$. For this we need something besides IMCF to complete the job which is where the assumptions of Theorem \ref{SPMT} and Theorem \ref{SRPI} come into play. These assumptions and the results that follow are combined with the rigidity result of Petersen and Wei \cite{PW}, Theorem \ref{rigidity}, in order to improve from $L^2$ curvature convergence results to $L^2$ metric convergence, which completes the proof of Theorems \ref{SPMT} and \ref{SRPI}.

\section{Estimates for Asymptotically Hyperbolic Manifolds Foliated by IMCF}\label{Sect-Est}

We start by obtaining some simple but useful estimates where it will be important to remember the definition of the Hawking mass defined for a hypersurface $\Sigma^2 \subset M^3$, where $M^3$ is an asymptotically Hyperbolic manifold,
\begin{align}
m_H(\Sigma) = \sqrt{\frac{|\Sigma|}{(16\pi)^3}} \left (16 \pi - \int_{\Sigma} H^2-4 d \mu \right ).
\end{align}

\begin{Lem} \label{naiveEstimate}
Let $\Sigma^2 \subset M^3$ be a hypersurface and $\Sigma_t$ its corresponding solution of IMCF. If $m_1 \le m_H(\Sigma_t) \le m_2$ then 
\begin{align}
|\Sigma_t|&=|\Sigma_0| e^t
\\16 \pi \left (1 - \sqrt{\frac{16 \pi}{|\Sigma_0|}}m_2e^{-t/2}  \right ) &\le \int_{\Sigma_t} H^2-4 d \mu \le 16 \pi \left (1 - \sqrt{\frac{16 \pi}{|\Sigma_0|}}m_1e^{-t/2}  \right ) \label{Eq-NaiveAvg}
\\ \frac{16 \pi}{|\Sigma_0|} \left (1 - \sqrt{\frac{16 \pi}{|\Sigma_0|}}m_2e^{-t/2}  \right )e^{-t} &\le \dashint_{\Sigma_t} H^2-4 d \mu \le  \frac{16 \pi}{|\Sigma_0|} \left (1 - \sqrt{\frac{16 \pi}{|\Sigma_0|}}m_1e^{-t/2}  \right )e^{-t}\label{AvgHEst}
\end{align}
where $|\Sigma_t|$ is the $n$-dimensional area of $\Sigma$ and $V(\Sigma_t)$ is the $n+1$-dimensional enclosed volume.

Hence if $m_{H}(\Sigma_T^i) \rightarrow 0$ then ,
\begin{align}
\bar{H^2}_i(t):=\dashint_{\Sigma_t^i} H_i^2 d \mu \rightarrow  \frac{4}{r_0}e^{-t}+4,\label{unifAvgHEst1}
\end{align}
for every $t\in [0,T]$.

If $m_{H}(\Sigma_T^i)-m_{H}(\Sigma_0^i) \rightarrow 0$ and $m_{H}(\Sigma_0^i) \rightarrow m > 0$ then,
\begin{align}
\bar{H^2}_i(t):=\dashint_{\Sigma_t^i} H_i^2 d \mu\rightarrow \frac{4}{r_0^2} \left(1-\frac{2}{r_0}m e^{-t/2}\right)e^{-t} +4, \label{unifAvgHEst2}
\end{align}
for every $t\in [0,T]$.
\end{Lem}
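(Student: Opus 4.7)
The plan is to unpack the definition of the Hawking mass and rearrange algebraically, using nothing more than the first-order IMCF evolution of area and the (Geroch-type) monotonicity of $m_H$ along IMCF in the setting $R \ge -6$.

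First I would establish the area law $|\Sigma_t| = |\Sigma_0| e^t$. Under IMCF the speed is $1/H$, so $\frac{d}{dt}|\Sigma_t| = \int_{\Sigma_t} \frac{H}{H}\,d\mu = |\Sigma_t|$, which integrates to the stated exponential growth. Next I would solve the Hawking-mass identity
\begin{align*}
m_H(\Sigma_t) = \sqrt{\frac{|\Sigma_t|}{(16\pi)^3}}\left(16\pi - \int_{\Sigma_t} H^2 - 4 \, d\mu\right)
\end{align*}
for $\int_{\Sigma_t} H^2 - 4 \, d\mu$ and substitute $|\Sigma_t| = |\Sigma_0|e^t$, yielding
\begin{align*}
\int_{\Sigma_t} H^2 - 4 \, d\mu = 16\pi\left(1 - \sqrt{\tfrac{16\pi}{|\Sigma_0|}}\,m_H(\Sigma_t)\,e^{-t/2}\right).
\end{align*}
Since the right-hand side is a decreasing linear function of $m_H(\Sigma_t)$, the two-sided bound $m_1 \le m_H(\Sigma_t) \le m_2$ inserts directly to give \eqref{Eq-NaiveAvg}. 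Dividing by $|\Sigma_t| = |\Sigma_0|e^t$ gives the average bound \eqref{AvgHEst}.

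For the two convergence assertions I would invoke Geroch monotonicity of the Hawking mass under smooth IMCF (valid because the class $\mathcal{M}_{r_0,H_0,I_0}^{T,H_1,A_1}$ builds in $R \ge -6$ and $m_H(\Sigma_0) \ge 0$), so that $m_H(\Sigma_0^i) \le m_H(\Sigma_t^i) \le m_H(\Sigma_T^i)$ for $t \in [0,T]$. Under the PMT hypothesis $m_H(\Sigma_T^i) \to 0$, sandwiching forces $m_H(\Sigma_t^i) \to 0$ uniformly in $t$, and plugging $m_1 = m_2 = 0$ (with $|\Sigma_0^i| = 4\pi r_0^2$) into \eqref{AvgHEst} and adding the constant $4$ produces \eqref{unifAvgHEst1}. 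Under the RPI hypothesis $m_H(\Sigma_T^i) - m_H(\Sigma_0^i) \to 0$ and $m_H(\Sigma_0^i) \to m$, the same sandwich forces $m_H(\Sigma_t^i) \to m$ for each $t$; substituting $m_1 = m_2 = m$ gives \eqref{unifAvgHEst2}.

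The main obstacle is not technical — everything reduces to algebra and one application of monotonicity — but one has to be careful to invoke the hyperbolic version of Geroch monotonicity (with the $-6$ appearing in $R\ge -6$ matching the $-4$ that has been subtracted inside the Hawking mass), and to keep the normalization $|\Sigma_0| = 4\pi r_0^2$ straight so that the numerical constants $4/r_0^2$ and $2/r_0$ appear correctly. After that the stability hypotheses on $m_H$ enter only through the trivial sandwich, so no further IMCF estimates are needed for this lemma.
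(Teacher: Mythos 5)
Your proposal is correct and matches the paper's (very terse) proof in substance: the area law $|\Sigma_t|=|\Sigma_0|e^t$ from integrating $\partial_t d\mu = d\mu$, the two-sided bound from solving the Hawking-mass identity for $\int_{\Sigma_t} H^2-4\,d\mu$, and the pointwise-in-$t$ limits from squeezing $m_H(\Sigma_t^i)$ between $m_H(\Sigma_0^i)\ge 0$ and $m_H(\Sigma_T^i)$ via Geroch monotonicity. Your normalization is right — $16\pi/|\Sigma_0| = 4/r_0^2$ — which is consistent with the formula \eqref{unifAvgHEst2} and with $\bar g$, whereas the coefficient $4/r_0$ printed in \eqref{unifAvgHEst1} appears to be a typographical slip in the paper.
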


\begin{proof}
The last two estimates follow directly from the definition of the Hawking mass and the first estimate is standard for IMCF. The equations \eqref{unifAvgHEst1} and \eqref{unifAvgHEst2} follow from \eqref{AvgHEst} and the assumption on the Hawking mass along the sequence.
\end{proof}

\begin{Lem}\label{dtintEstimate}
For any solution of IMCF we have the following formula
\begin{align}
\frac{d}{dt} \int_{\Sigma_t} H^2-4 d\mu =  \frac{(16 \pi)^{3/2}}{|\Sigma_t|^{1/2}} \left (\frac{1}{2} m_H(\Sigma_t) - \frac{d}{dt}m_H(\Sigma_t) \right )
\end{align}
So if we assume that $m_H(\Sigma_t^i) \rightarrow 0$ as $i \rightarrow \infty$ then we have for a.e. $t \in [0,T]$ that ,
\begin{align}
\frac{d}{dt} \int_{\Sigma_t^i} H^2-4 d\mu \rightarrow 0.\label{Eq-dtH^2PMT}
\end{align}

If we assume that $m_H(\Sigma_T^i) - m_H(\Sigma_0^i) \rightarrow 0$ and $m_H(\Sigma_t^i)\rightarrow m > 0$ as $ i \rightarrow \infty$ then we have that, 
\begin{align}
\frac{d}{dt} \int_{\Sigma_t^i} H^2-4 d\mu \rightarrow \frac{16 \pi}{r_0}me^{-t/2}. \label{Eq-dtH^2RPI}
\end{align}
\end{Lem}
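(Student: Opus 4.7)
The identity is derived by algebra and one time derivative. Solving the Hawking mass formula for the Willmore-like integral gives $\int_{\Sigma_t} H^2 - 4 \, d\mu = 16\pi - \frac{(16\pi)^{3/2}}{|\Sigma_t|^{1/2}} m_H(\Sigma_t)$. Differentiating and using $|\Sigma_t| = |\Sigma_0| e^t$ from Lemma~\ref{naiveEstimate}, so that $\frac{d}{dt}|\Sigma_t|^{-1/2} = -\frac{1}{2}|\Sigma_t|^{-1/2}$, produces two terms sharing the common factor $\frac{(16\pi)^{3/2}}{|\Sigma_t|^{1/2}}$ and yields the stated formula after regrouping.

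For the asymptotic conclusions I would combine the formula with the Geroch-type monotonicity of the Hawking mass along IMCF, which is available because the class $\mathcal{M}_{r_0,H_0,I_0}^{T,H_1,A_1}$ requires $R \ge -6$ and $m_H(\Sigma_0^i) \ge 0$ (and $\Sigma_0^i$ will be topologically a sphere as noted after Theorem~\ref{SRPI}). Monotonicity gives $\frac{d}{dt} m_H(\Sigma_t^i) \ge 0$ and the sandwich $m_H(\Sigma_0^i) \le m_H(\Sigma_t^i) \le m_H(\Sigma_T^i)$ for every $t$. In the PMT case the outer terms tend to $0$, so $m_H(\Sigma_t^i) \to 0$ uniformly in $t$, and the non-negative derivatives integrate over $[0,T]$ to $m_H(\Sigma_T^i) - m_H(\Sigma_0^i) \to 0$, giving $\frac{d}{dt} m_H(\Sigma_t^i) \to 0$ in $L^1([0,T])$ and pointwise a.e.\ along a subsequence. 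Plugging back into the formula, with $|\Sigma_t^i| = 4\pi r_0^2 e^t$ forcing the prefactor to be uniformly bounded, both terms vanish in the limit and establish \eqref{Eq-dtH^2PMT}.

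The RPI case runs in parallel: the assumptions $m_H(\Sigma_T^i) - m_H(\Sigma_0^i) \to 0$ and $m_H(\Sigma_0^i) \to m$, together with the sandwich, force $m_H(\Sigma_t^i) \to m$ for every $t$, while $\frac{d}{dt} m_H(\Sigma_t^i) \to 0$ a.e.\ exactly as above. Substituting into the formula and simplifying constants yields the limit $\frac{(16\pi)^{3/2}}{(4\pi r_0^2 e^t)^{1/2}} \cdot \frac{m}{2} = \frac{16\pi m}{r_0} e^{-t/2}$, which is \eqref{Eq-dtH^2RPI}. I do not anticipate a serious obstacle; the only mildly delicate point is that a sequence of non-negative functions with integrals tending to zero gives a.e.\ convergence only after passing to a subsequence (equivalently convergence in measure), and the statement should be read in that sense, which is enough for the downstream use of these limits when integrated against test functions.
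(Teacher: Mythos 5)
Your proof is correct and follows essentially the same route as the paper: derive the formula by differentiating the Hawking mass identity (you solve for the Willmore integral first and then differentiate, while the paper differentiates $m_H$ first and rearranges, but these are algebraically equivalent), then use Geroch monotonicity and the sandwich $0 \le m_H(\Sigma_0^i) \le m_H(\Sigma_t^i) \le m_H(\Sigma_T^i)$ together with $L^1$ convergence of the nonnegative derivatives to conclude. Your observation that the a.e.\ pointwise convergence of $\frac{d}{dt}m_H(\Sigma_t^i)$ properly requires passing to a subsequence (or should be read as convergence in measure) is a genuine refinement of a point the paper leaves implicit.
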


\begin{proof}
By using the formula for the Hawking mass we can compute that
\begin{align}
\frac{d}{dt}m_H(\Sigma_t)&= \frac{1}{2}m_H(\Sigma_t) - \sqrt{\frac{|\Sigma_t|}{(16\pi)^3}} \frac{d}{dt}\int_{\Sigma} H^2-4 d \mu \label{Eq-dtInt}
\end{align}
Rearranging this equation by solving for $\frac{d}{dt}\int_{\Sigma} H^2-4 d \mu $ we find the first formula in the statement of the lemma.

By Geroch monotonicity we know that $\frac{d}{dt}m_H(\Sigma_t) \ge 0$ and so if $m_H(\Sigma_t^i)\rightarrow 0$ as $i \rightarrow \infty$ then we must have that $ \frac{d}{dt}m_H(\Sigma_t^i) \rightarrow 0$ for almost every $t \in [0,T]$. Combining with \eqref{Eq-dtInt} shows that $\frac{d}{dt}\int_{\Sigma_t^i}H_i^2-4 d\mu \rightarrow 0$ for almost every $t \in [0,T]$.

If $m_H(\Sigma_T^i) - m_H(\Sigma_0^i) \rightarrow 0$ as $ i \rightarrow \infty$ then we have that $\int_0^T \frac{d}{dt}m_H(\Sigma_t)dt \rightarrow 0$ and so by Geroch monotonicity we must have that $\frac{d}{dt}m_H(\Sigma_t) \rightarrow 0$ for almost every $t \in [0,T]$. Then by combining with the assumption that $m_H(\Sigma_t^i)\rightarrow m$ as $ i \rightarrow \infty$ we get the desired result in this case.

\end{proof}

Now we obtain the following crucial estimate.

\begin{Lem}\label{CrucialEstimate} Let $\Sigma^2\subset M^3$ be a compact, connected surface with corresponding solution to IMCF $\Sigma_t$ then,
\begin{align}
m_H(\Sigma_t)\left (\frac{(16 \pi)^{3/2}}{2|\Sigma_t|^{1/2}}\right ) \ge \frac{d}{dt} \int_{\Sigma_t} H^2-4 d\mu + \int_{\Sigma_t}2\frac{|\nabla H|^2}{H^2} +\frac{1}{2}(\lambda_1-\lambda_2)^2 + R+6 d\mu, \label{Eq-Crux}
\end{align}
which can be rewritten and integrated to find,
\begin{align}
m_H(\Sigma_T)-m_H(\Sigma_0)  \ge \int_0^T \frac{|\Sigma_t|^{1/2}}{(16 \pi)^{3/2}} \left (\int_{\Sigma_t}2\frac{|\nabla H|^2}{H^2} +\frac{1}{2}(\lambda_1-\lambda_2)^2 + R+6 d\mu \right)dt.
\end{align}
\end{Lem}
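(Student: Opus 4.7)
The plan is to reproduce the Geroch-type monotonicity calculation, but keeping track of the extra "$-6$" shift in the scalar curvature that is characteristic of the asymptotically hyperbolic setting. The starting point is equation \eqref{Eq-dtInt} from Lemma \ref{dtintEstimate}, which already isolates $\frac{d}{dt}m_H$ in terms of $\frac{d}{dt}\int(H^2-4)d\mu$. So the real content of this lemma is an upper bound on $\frac{d}{dt}\int_{\Sigma_t}(H^2-4)d\mu$; once that is in place the pointwise and integrated statements follow by algebra.

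First I would compute $\frac{d}{dt}\int_{\Sigma_t}H^2 d\mu$ directly from IMCF. Using the standard evolutions $\frac{\partial}{\partial t}d\mu=d\mu$ and $\frac{\partial H}{\partial t}=-\Delta(1/H)-(|A|^2+\mathrm{Rc}(\nu,\nu))/H$, and integrating by parts on the Laplacian term to produce $-\int 2|\nabla H|^2/H^2\,d\mu$, one gets
\begin{align*}
\frac{d}{dt}\int_{\Sigma_t} H^2\,d\mu = \int_{\Sigma_t}\left(-\frac{2|\nabla H|^2}{H^2}-2|A|^2-2\mathrm{Rc}(\nu,\nu)+H^2\right)d\mu.
\end{align*}
Next I would eliminate $\mathrm{Rc}(\nu,\nu)$ in favor of intrinsic and ambient scalar quantities via the Gauss equation, $2\mathrm{Rc}(\nu,\nu)=R-2K+H^2-|A|^2$, and use $|A|^2=\tfrac12 H^2+\tfrac12(\lambda_1-\lambda_2)^2$. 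After collecting terms the right-hand side becomes
\begin{align*}
\int_{\Sigma_t}\left(-\frac{2|\nabla H|^2}{H^2}-\frac{1}{2}(\lambda_1-\lambda_2)^2-\frac{H^2}{2}+2K-R\right)d\mu.
\end{align*}

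Now I would subtract $\frac{d}{dt}\int_{\Sigma_t}4\,d\mu=4|\Sigma_t|$ to form $\frac{d}{dt}\int(H^2-4)d\mu$, and absorb the $R$ term by writing $-R=-(R+6)+6$, which is where the asymptotically hyperbolic hypothesis $R\geq -6$ enters and yields the desired nonnegative $R+6$. Gauss--Bonnet applied to a compact connected surface gives $\int_{\Sigma_t}K\,d\mu\leq 4\pi$, and the "$-\tfrac{H^2}{2}$" piece is rewritten as $-\tfrac12(H^2-4)-2$. Adding up all of the constant/area contributions one finds that the remainder is exactly
\begin{align*}
8\pi-\tfrac{1}{2}\int_{\Sigma_t}(H^2-4)d\mu \;=\; \tfrac{1}{2}\bigl(16\pi-\textstyle\int_{\Sigma_t}(H^2-4)d\mu\bigr)\;=\;\frac{m_H(\Sigma_t)(16\pi)^{3/2}}{2|\Sigma_t|^{1/2}},
\end{align*}
by the very definition of the Hawking mass. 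Moving the $\nabla H$, $(\lambda_1-\lambda_2)^2$ and $R+6$ terms to the other side gives the pointwise inequality \eqref{Eq-Crux}.

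Finally, to obtain the integrated version, I would substitute \eqref{Eq-Crux} into \eqref{Eq-dtInt}: the $\tfrac{1}{2}m_H$ term cancels, leaving $\frac{d}{dt}m_H(\Sigma_t)\geq \frac{|\Sigma_t|^{1/2}}{(16\pi)^{3/2}}\int_{\Sigma_t}\bigl(\tfrac{2|\nabla H|^2}{H^2}+\tfrac{1}{2}(\lambda_1-\lambda_2)^2+R+6\bigr)d\mu$, after which integrating in $t$ from $0$ to $T$ yields the second formula. The only conceptually delicate step is the bookkeeping in combining the $H^2$, $4$, $K$ and scalar-curvature constants so that the leftover pieces regroup into the Hawking mass; the assumption that $\Sigma$ is compact and connected is precisely what lets us use $\int K\leq 4\pi$ via Gauss--Bonnet, and the hypothesis $R\geq -6$ (built into the class $\mathcal{M}_{r_0,H_0,I_0}^{T,H_1,A_1}$) is what makes the $R+6$ term a nonnegative and geometrically meaningful quantity.
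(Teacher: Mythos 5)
Your proposal reproduces the paper's Geroch-type monotonicity computation essentially step for step: compute $\frac{d}{dt}\int_{\Sigma_t}(H^2-4)\,d\mu$ using the IMCF evolution of $H$ and $d\mu$, integrate by parts on $-2H\Delta(1/H)$, eliminate $Rc(\nu,\nu)$ via the traced Gauss equation, substitute $|A|^2=\tfrac12 H^2+\tfrac12(\lambda_1-\lambda_2)^2$, apply Gauss--Bonnet with $\chi\le 2$, and regroup the constants into the Hawking mass before combining with Lemma \ref{dtintEstimate} and integrating in $t$. This is the same route the paper takes, with only cosmetic differences in how you bookkeep the $-R=-(R+6)+6$ shift (the paper folds it into the $-\tfrac12 H^2-4\to -R-6$ step). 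One small clarification worth keeping in mind: the lemma itself makes no assumption on $R$; the hypothesis $R\ge -6$ only becomes relevant downstream (Corollary \ref{GoToZero}) where nonnegativity of $R+6$ is needed to conclude that the individual integrals tend to zero, so your closing remark about $R\ge-6$ being ``what makes the $R+6$ term nonnegative'' is contextual motivation rather than an ingredient of this proof.
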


\begin{proof}
We will use the following facts in the derivation below where $R$ is the scalar curvature of $M$ and $K$ is the Gauss curvature of $\Sigma_t$.

\begin{align}
-Rc(\nu,\nu) &= -\frac{R}{2} +K - \frac{1}{2}(H^2 - |A|^2)
\\ |A|^2 &= \frac{1}{2}H^2 + \frac{1}{2} (\lambda_1-\lambda_2)^2
\\ \int_{\Sigma_t} K d\mu_t &= 2 \pi \chi(\Sigma_t)
\end{align}
which follow from the Gauss equations, the definition of $|A|^2$ and the Gauss-Bonnet theorem (where $\lambda_i$ are the principal curvatures of $\Sigma$). We will use these equations below.

Now we compute the time derivative of $\int_{\Sigma_t}H^2 d \mu$

\begin{align}
\frac{d}{dt} \int_{\Sigma_t} H^2-4 d \mu_t &= \int_{\Sigma_t} 2 H \frac{\partial H}{\partial t} + H^2 -4d \mu_t
\\&=\int_{\Sigma_t} -2H \Delta \left ( \frac{1}{H} \right ) - 2 |A|^2 -2  Rc(\nu,\nu) + H^2-4 d \mu_t \label{Eq-dtFirst}
\\ &=\int_{\Sigma_t}-2\frac{|\nabla H|^2}{H^2} -|A|^2 - R + 2K  -4d \mu_t
\\&= 4 \pi  \chi(\Sigma_t) +\int_{\Sigma_t}-2\frac{|\nabla H|^2}{H^2} - \frac{1}{2}(\lambda_1-\lambda_2)^2 - R  -\frac{1}{2}H^2 -4d \mu_t
\\&\le m_H(\Sigma_t) \frac{(16 \pi)^{3/2}}{2|\Sigma_t|^{1/2}}+ \int_{\Sigma_t}-2\frac{|\nabla H|^2}{H^2} - \frac{1}{2}(\lambda_1-\lambda_2)^2 - R -6d \mu_t \label{Eq-lastCruc}
\end{align}
where we are using that $\chi(\Sigma_t)\le 2$ for compact, connected surfaces.
Rearranging \eqref{Eq-lastCruc} we find that
\begin{align}
m_H(\Sigma_t) \frac{(16 \pi)^{3/2}}{|\Sigma_t|^{1/2}} \ge \frac{d}{dt} \int_{\Sigma_t} H^2-4 d\mu + \int_{\Sigma_t}2\frac{|\nabla H|^2}{H^2} +\frac{1}{2}(\lambda_1-\lambda_2)^2 + R+6 d\mu
\end{align}
Now by combining with Lemma \ref{dtintEstimate} we find 
\begin{align}
\frac{d}{dt}m_H(\Sigma_t)  \ge \frac{|\Sigma_t|^{1/2}}{(16 \pi)^{3/2}} \int_{\Sigma_t}2\frac{|\nabla H|^2}{H^2} +\frac{1}{2}(\lambda_1-\lambda_2)^2 + R+6 d\mu
\end{align}
and then by integrating both sides from $0$ to $T$ we find the desired estimate.
\end{proof}

By combining Lemma \ref{CrucialEstimate} with Lemma \ref{dtintEstimate} we are able to deduce the crucial estimates below which we will show leads to a stability of positive mass theorem.

\begin{Cor} \label{GoToZero}Let $\Sigma^i\subset M^i$ be a compact, connected surface with corresponding solution to IMCF $\Sigma_t^i$. If $m_H(\Sigma_0)\ge0$ and $m_H(\Sigma^i_T) \rightarrow 0$  then for almost every $t \in [0,T]$ we have that,
\begin{align}
&\int_{\Sigma_t^i} \frac{|\nabla H_i|^2}{H_i^2}d \mu \rightarrow 0, \hspace{1.2 cm} \int_{\Sigma_t^i} (\lambda_1^i-\lambda_2^i)^2d \mu \rightarrow 0,\hspace{.8 cm} \int_{\Sigma_t^i} R^i+6 d \mu \rightarrow 0,
\\&\int_{\Sigma_t^i} Rc^i(\nu,\nu)+2d \mu \rightarrow 0, \hspace{.1 cm} \int_{\Sigma_t^i} K_{12}^i+1d \mu \rightarrow 0, \hspace{1 cm} \int_{\Sigma_t^i} H_i^2-4 d\mu\rightarrow 16\pi,
\\&\int_{\Sigma_t^i} |A|_i^2-2 d \mu \rightarrow 8 \pi,\hspace{.5 cm} \int_{\Sigma_t^i} \lambda_1^i\lambda_2^i-1 d \mu \rightarrow 4\pi, \hspace{1 cm} \chi(\Sigma_t^i) \rightarrow 2,
\end{align}
as $i \rightarrow \infty$ where $K_{12}$ is the ambient sectional curvature tangent to $\Sigma_t$. We also find that $\Sigma_t^i$ must eventually become topologically a sphere. 

If $\left (m_H(\Sigma^i_T)-m_H(\Sigma^i_0) \right ) \rightarrow 0$ where $m_H(\Sigma_0) \rightarrow m > 0$ then the first three integrals listed above tend to zero and for almost every $t \in [0,T]$ we have that,
\begin{align}
 &\int_{\Sigma_t^i} H_i^2-4 d\mu\rightarrow 16 \pi \left (1 - \sqrt{\frac{16 \pi}{|\Sigma_0|}}me^{-t/2}  \right ),
 \hspace{0.2 cm} \int_{\Sigma_t^i} |A|_i^2-2 d \mu \rightarrow 8 \pi \left (1 - \sqrt{\frac{16 \pi}{|\Sigma_0|}}me^{-t/2}  \right ), \hspace{0.5 cm}
 \\& \int_{\Sigma_t^i} \lambda_1^i\lambda_2^i-1 d \mu \rightarrow 4 \pi \left (1 - \sqrt{\frac{16 \pi}{|\Sigma_0|}}me^{-t/2}  \right ),\hspace{0.5 cm}\int_{\Sigma_t^i}Rc^i(\nu,\nu)+2d\mu \rightarrow -\frac{8\pi}{r_0}m e^{-t/2},
 \\ &\int_{\Sigma_t^i}K_{12}^i+1d\mu \rightarrow \frac{8\pi}{r_0}m e^{-t/2},\hspace{3 cm}\chi(\Sigma_t^i) \rightarrow 2.
\end{align}
We also find that $\Sigma_t^i$ must eventually become topologically a sphere.
\end{Cor}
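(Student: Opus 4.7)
The plan is to extract everything from three ingredients already in hand --- the crucial integral estimate (Lemma \ref{CrucialEstimate}), the averaged mean curvature bounds (Lemma \ref{naiveEstimate}), and the time-derivative identity (Lemma \ref{dtintEstimate}) --- using the Gauss equation $|A|^2 = \tfrac12 H^2 + \tfrac12(\lambda_1-\lambda_2)^2$, the contracted Gauss identity $Rc(\nu,\nu) = R/2 - K + \lambda_1\lambda_2$, and the Gauss-Bonnet theorem $\int K\,d\mu = 2\pi\chi$ to pivot between the listed quantities.

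In the PMT case I would first observe that Geroch monotonicity together with $m_H(\Sigma_0^i)\ge 0$ forces $m_H(\Sigma_T^i) - m_H(\Sigma_0^i)\to 0$. Since $R^i\ge -6$ and $|\Sigma_t^i|=4\pi r_0^2 e^t$ is uniform on $[0,T]$, the integrand on the right of Lemma \ref{CrucialEstimate} is pointwise nonnegative and its $L^1([0,T])$ norm tends to zero. Nonnegativity of each summand and a routine subsequence argument then give, for a.e. $t$, the three vanishing integrals $\int|\nabla H_i|^2/H_i^2$, $\int(\lambda_1^i-\lambda_2^i)^2$, $\int(R^i+6)\to 0$.

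To extract $\chi(\Sigma_t^i)\to 2$ and the remaining quantities I would reuse the intermediate identity from the proof of Lemma \ref{CrucialEstimate},
\begin{equation*}
\tfrac{d}{dt}\!\int_{\Sigma_t}(H^2-4)\,d\mu = 4\pi\chi(\Sigma_t) - \int_{\Sigma_t}\!\Big(2\tfrac{|\nabla H|^2}{H^2} + \tfrac12(\lambda_1-\lambda_2)^2 + (R+6) + \tfrac12(H^2-4)\Big) d\mu.
\end{equation*}
Substituting the three vanishing limits together with $\int(H^2-4)\to 16\pi$ from Lemma \ref{naiveEstimate} and $\tfrac{d}{dt}\int(H^2-4)\to 0$ from Lemma \ref{dtintEstimate} forces $4\pi\lim\chi(\Sigma_t^i) = 8\pi$, so $\chi\to 2$; as $\chi\in\Z$ is bounded by $2$ on compact connected surfaces, $\Sigma_t^i$ is eventually a topological sphere. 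The rest is linear algebra: $\int|A|^2-2\to 8\pi$ from $|A|^2=\tfrac12H^2+\tfrac12(\lambda_1-\lambda_2)^2$, $\int\lambda_1\lambda_2-1\to 4\pi$ from $2\lambda_1\lambda_2=H^2-|A|^2$, $\int Rc(\nu,\nu)+2\to 0$ from $Rc(\nu,\nu)=R/2-K+\lambda_1\lambda_2$ combined with $\int K \to 4\pi$, and $\int K_{12}+1\to 0$ from $K_{12}=R/2-Rc(\nu,\nu)$.

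The RPI case follows the same script. The three integrals still vanish for a.e. $t$, but the inputs from Lemmas \ref{naiveEstimate} and \ref{dtintEstimate} are now $\int(H^2-4)\to 16\pi(1-(2/r_0)me^{-t/2})$ and $\tfrac{d}{dt}\int(H^2-4)\to (16\pi/r_0)m e^{-t/2}$. Substituting into the same displayed identity, the $m$-dependent pieces cancel and again $\chi\to 2$; the same linear manipulations then yield the stated $m$-dependent right-hand sides for $|A|^2-2$, $\lambda_1\lambda_2-1$, $Rc(\nu,\nu)+2$, and $K_{12}+1$. The only mild subtlety throughout is the standard upgrade from $L^1$-in-$t$ convergence of a nonnegative sequence to a.e.-in-$t$ convergence via a subsequence, which the author treats tacitly in Lemma \ref{dtintEstimate}; no deeper obstacle seems to arise.
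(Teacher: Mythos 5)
Your argument is correct and uses the same three inputs the paper uses (Lemma~\ref{CrucialEstimate}, Lemma~\ref{naiveEstimate}, Lemma~\ref{dtintEstimate}, together with the Gauss identities and Gauss--Bonnet), but you route the algebra differently. The paper substitutes the vanishing limits and $\frac{d}{dt}\int (H_i^2-4)\,d\mu\to 0$ into the form of the $dt$-identity that isolates $Rc(\nu,\nu)$, obtaining $\int_{\Sigma_t^i}(Rc^i(\nu,\nu)+2)\,d\mu\to 0$ directly, then $\int(K_{12}^i+1)\,d\mu\to 0$, and only at the end passes through $\int K^i\,d\mu = \int(\lambda_1^i\lambda_2^i-1)\,d\mu + \int(K_{12}^i+1)\,d\mu$ and Gauss--Bonnet to get $\chi(\Sigma_t^i)\to 2$. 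You instead plug the vanishing limits into the form of the $dt$-identity that already displays $4\pi\chi(\Sigma_t)$, conclude $\chi(\Sigma_t^i)\to 2$ first, then use Gauss--Bonnet to get $\int K^i\,d\mu\to 4\pi$ and derive $\int(Rc^i(\nu,\nu)+2)\,d\mu\to 0$ and $\int(K_{12}^i+1)\,d\mu\to 0$ from it. Both orderings are logically equivalent given the other established limits, so neither buys anything beyond clarity of presentation; yours makes the topological conclusion visible earlier, and you also write out the $m$-dependent cancellation in the RPI case, which the paper leaves as ``follow similarly.'' One small point worth making explicit in a final write-up is the same one the paper also elides: the right-hand side of Lemma~\ref{CrucialEstimate} is a nonnegative integrand (using $R^i\ge -6$), so the $L^1([0,T])$-to-a.e.-$t$ upgrade is the standard Fatou/subsequence step, and the diagonal argument is needed because Lemma~\ref{dtintEstimate} and Proposition~\ref{avgH} each produce their own full-measure sets of good $t$.
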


\begin{proof}
The first three integrals converge to $0$ by Lemma \ref{CrucialEstimate} \eqref{Eq-Crux} so now we will show how to deduce the last three. Using the calculation in \ref{CrucialEstimate}   we can rewrite \eqref{Eq-dtFirst} as
\begin{align}
\frac{d}{dt} \int_{\Sigma_t^i} H_i^2-4 d \mu_t &=\int_{\Sigma_t^i} -2\frac{|\nabla H_i|^2}{H_i^2} - (\lambda_1^i-\lambda_2^i)^2 -2  Rc^i(\nu,\nu)  -4d \mu_t
\end{align}
which implies that $\int_{\Sigma_t^i}Rc^i(\nu,\nu)+2d \mu \rightarrow 0$ for almost every $t \in [0,T]$ since every other integral in that expression $\rightarrow 0$ for almost every $t \in [0,T]$. Then we can write
\begin{align}
\int_{\Sigma_t^i} K_{12}^i+1 d\mu &=\int_{\Sigma_t^i} \frac{1}{2}(R^i- 2Rc^i(\nu,\nu)+2)  d \mu
\\&= \int_{\Sigma_t^i}  \frac{1}{2}(R^i+6)-(  Rc^i(\nu,\nu)+2)  d \mu
\end{align}
which implies that $\int_{\Sigma_t^i}K_{12}^i+1d \mu \rightarrow 0$ for almost every $t \in [0,T]$. Lemma \ref{naiveEstimate} equation \eqref{Eq-NaiveAvg} implies that $\int_{\Sigma_t^i} H_i^2-4 d\mu\rightarrow 16\pi$ and so if we write
\begin{align}
\int_{\Sigma_t^i} |A|^2_i-2d\mu &= \frac{1}{2}\int_{\Sigma_t^i} H_i^2-4 + (\lambda_1^i-\lambda_2^i)^2 d \mu \rightarrow 0.
\end{align}

Lastly we notice
\begin{align}
\int_{\Sigma_t^i} \lambda_1^i\lambda_2^i-1d \mu &=\int_{\Sigma_t^i} \frac{1}{2}(H_i^2 - |A|_i^2-2)
\\&=\int_{\Sigma_t^i} \frac{1}{2}(H_i^2-4) - \frac{1}{2}(|A|_i^2-2) \rightarrow 4\pi
\end{align}
and so
\begin{align}
2 \pi \chi(\Sigma_t^i) = \int_{\Sigma_t^i} K^i d\mu &= \int_{\Sigma_t^i}\lambda_1^i\lambda_2^i + K_{12}^i  d\mu
\\&=\int_{\Sigma_t^i}(\lambda_1^i\lambda_2^i - 1) + (K_{12}^i+1)  d\mu \rightarrow 4\pi.
\end{align}
Since $\chi(\Sigma_t^i)$ is discrete we see by the last convergence that $\Sigma_t^i$ must eventually become topologically a sphere. 

In the case where we assume $\left (m_H(\Sigma^i_T)-m_H(\Sigma^i_0) \right ) \rightarrow 0$ the convergence results follow similarly.
\end{proof}

Now we obtain an estimate which gives us weak convergence of $Rc^i(\nu,\nu)$ which will be used in Section \ref{sect-End}.

\begin{Lem}\label{WeakRicciEstimate}Let $\Sigma^i_0\subset M^3_i$ be a compact, connected surface with corresponding solution to IMCF $\Sigma_t^i$. Then if $\phi \in C^1(\Sigma\times (a,b))$  and $0\le a <b\le T$ we can compute the  estimate,
\begin{align}
\int_a^b\int_{\Sigma_t^i}& 2\phi Rc^i(\nu,\nu)d\mu d t=  \int_{\Sigma_a^i} \phi H_i^2 d\mu -  \int_{\Sigma_b^i} \phi H_i^2 d\mu 
\\&+ \int_a^b\int_{\Sigma_t^i}2\phi\frac{|\nabla H_i|^2}{H_i^2}-2\frac{\hat{g}^j(\nabla \phi, \nabla H_i)}{H_i} +\phi(H_i^2-2|A|_i^2) d\mu .
\end{align}

If $m_H(\Sigma^i_T) \rightarrow 0$ and $\Sigma_t$ satisfies the hypotheses of Proposition \ref{avgH} then the estimate above implies
\begin{align}
\int_a^b\int_{\Sigma_t}& \phi Rc^i(\nu,\nu)d\mu d t \rightarrow  \int_a^b \int_{\Sigma} -2r_0^2 e^t \phi d\sigma dt.
\end{align}
If $m_H(\Sigma^i_T)-m_H(\Sigma^i_0)  \rightarrow 0$, $m_H(\Sigma_T) \rightarrow m > 0$ and $\Sigma_t$ satisfies the hypotheses of Proposition \ref{avgH}  then the estimate above implies,
\begin{align}
\int_a^b\int_{\Sigma_t}& \phi Rc^i(\nu,\nu)d\mu d t \rightarrow \int_a^b\int_{\Sigma} -2\left(\frac{1 }{r_0}me^{-t/2} + r_0^2 e^t\right) \phi d\sigma dt.
\end{align}
\end{Lem}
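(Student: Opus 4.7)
The plan is to derive the identity by differentiating $\int_{\Sigma_t}\phi H^2 d\mu$ in $t$ and then to pass to the limit term by term using Corollary \ref{GoToZero} together with the forthcoming Proposition \ref{avgH}.

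The starting point for the identity is the standard evolution of $H$ under IMCF, namely $\partial_t H = -\Delta_{\Sigma_t}(1/H) - (|A|^2 + Rc(\nu,\nu))/H$, together with $\partial_t d\mu = d\mu$. Differentiating under the integral and using the product rule gives
\[
\frac{d}{dt}\int_{\Sigma_t}\phi H^2 d\mu = \int_{\Sigma_t}\Bigl[2\phi H\, \partial_t H + H^2 \partial_t \phi + \phi H^2\Bigr] d\mu.
\]
I would then handle the $-2\phi H\Delta(1/H)$ contribution by integration by parts, writing $\nabla(\phi H) = H\nabla \phi + \phi \nabla H$ to produce an $|\nabla H|^2/H^2$ term and the $\hat{g}(\nabla\phi,\nabla H)/H$ cross term. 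Isolating $\int 2\phi Rc(\nu,\nu) d\mu$, the three interior pieces $|\nabla H|^2/H^2$, $\hat{g}(\nabla\phi,\nabla H)/H$, and the algebraic $\phi(H^2 - 2|A|^2)$ emerge, and integrating in $t$ from $a$ to $b$ via the fundamental theorem of calculus produces the boundary contribution $\int_{\Sigma_a}\phi H^2 d\mu - \int_{\Sigma_b}\phi H^2 d\mu$ as claimed.

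For the asymptotic convergence, I would attack the right-hand side term by term. The $|\nabla H|^2/H^2$ integral tends to $0$ for a.e.\ $t$ by Corollary \ref{GoToZero}; the uniform bounds $H_0 \le H_i \le H_1$ together with $\phi$ bounded on the compact slab $\Sigma\times[a,b]$ allow dominated convergence to lift this to the $dt$ integration. The cross term $\int \hat{g}(\nabla\phi,\nabla H)/H\, d\mu$ is estimated by Cauchy-Schwarz as $(\int |\nabla\phi|^2 d\mu)^{1/2}(\int |\nabla H|^2/H^2 d\mu)^{1/2}$, whose second factor vanishes by the same corollary. The algebraic identity $H^2 - 2|A|^2 = -(\lambda_1-\lambda_2)^2$ combined with Corollary \ref{GoToZero} takes care of the $\phi(H^2 - 2|A|^2)$ term. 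For the two boundary terms $\int_{\Sigma_a^i}\phi H_i^2 d\mu$ and $\int_{\Sigma_b^i}\phi H_i^2 d\mu$, I would invoke Proposition \ref{avgH}, which under its hypotheses upgrades the convergence \eqref{unifAvgHEst1} or \eqref{unifAvgHEst2} of the \emph{average} of $H_i^2$ to the weighted convergence $\int \phi H_i^2 d\mu \to \overline{H^2}(t)\cdot r_0^2 e^t \int_\Sigma \phi\, d\sigma$. Substituting the two explicit limits of Lemma \ref{naiveEstimate} and simplifying reproduces the target $\int_\Sigma -2r_0^2 e^t \phi\, d\sigma$ in the PMT case and $\int_\Sigma -2(m e^{-t/2}/r_0 + r_0^2 e^t)\phi\, d\sigma$ in the RPI case.

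The main obstacle is the weighted convergence of the two boundary integrals. Corollary \ref{GoToZero} by itself only controls unweighted integrals of quantities like $H^2 - 4$ and $(\lambda_1-\lambda_2)^2$ over $\Sigma_t^i$, not pointwise or weighted behavior of $H^2$; one must argue that $H_i^2$ concentrates at its average $\overline{H^2}(t)$ so that $|\int_{\Sigma_t^i} \phi(H_i^2 - \overline{H^2}(t))\, d\mu|$ vanishes for bounded $\phi$. This is precisely what Proposition \ref{avgH} is designed to supply, presumably via an $L^2$ oscillation bound on $H_i^2$ obtained from the Gauss equation together with the diameter control built into its hypotheses. Once that proposition is in hand, the remainder of the lemma is a straightforward assembly of Cauchy-Schwarz, dominated convergence, and the limit formulas already established in this section.
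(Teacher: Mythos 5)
Your derivation of the identity is the same route the paper takes (differentiate $\int_{\Sigma_t}\phi H^2\,d\mu$, use $\partial_t H = -\Delta(1/H)-(|A|^2+Rc(\nu,\nu))/H$ and $\partial_t d\mu = d\mu$, integrate by parts on $\Sigma_t$, isolate the $Rc(\nu,\nu)$ term, integrate in $t$). However, there is a genuine gap in what happens to the $\partial_t\phi\, H^2$ term, and it is not a cosmetic one.

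You correctly include $H^2\partial_t\phi$ in the first display, but then you list only three interior pieces --- $|\nabla H|^2/H^2$, $\hat g(\nabla\phi,\nabla H)/H$, and $\phi(H^2-2|A|^2)$ --- and the term $\partial_t\phi\,H_i^2$ silently disappears before you pass to the limit. (The lemma's displayed identity in the paper also omits it, but the proof in the paper keeps it, and it must be there: the fundamental theorem of calculus only produces $\int_{\Sigma_a}\phi H^2\,d\mu - \int_{\Sigma_b}\phi H^2\,d\mu$, not the $\partial_t\phi$ piece.) This omission is fatal to the asymptotic claim. If you drop it, the surviving contributions are the two boundary integrals, which by the Proposition \ref{avgH} concentration argument converge to
\[
\int_\Sigma \phi(a,\cdot)\bigl(4+4r_0^2e^a\bigr)\,d\sigma \;-\; \int_\Sigma \phi(b,\cdot)\bigl(4+4r_0^2e^b\bigr)\,d\sigma,
\]
a purely boundary-in-$t$ expression, not the target $-4\int_a^b\int_\Sigma r_0^2e^t\,\phi\,d\sigma\,dt$. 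The missing step is precisely what the paper does: show $\int_a^b\int_{\Sigma_t}\partial_t\phi\,H_i^2\,d\mu\,dt \to \int_a^b\int_\Sigma \partial_t\phi\,(4+4r_0^2e^t)\,d\sigma\,dt$ using Lemma \ref{naiveEstimate} and Proposition \ref{avgH}, then integrate by parts in $t$ to produce boundary terms that cancel the two $\Sigma_a$, $\Sigma_b$ contributions, leaving exactly $-\int_a^b\int_\Sigma 4r_0^2e^t\phi\,d\sigma\,dt$. You need to add back the $\partial_t\phi\,H_i^2$ interior term, analyze its limit, and perform this $t$-integration by parts; the rest of your argument (Corollary \ref{GoToZero} plus dominated convergence for $|\nabla H|^2/H^2$, Cauchy--Schwarz for the cross term, $H^2-2|A|^2 = -(\lambda_1-\lambda_2)^2$ for the algebraic term, and Proposition \ref{avgH} for the boundary integrals) is sound and matches the paper.
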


\begin{proof}
Let $\phi \in C^1(\Sigma\times(a,b))$ and compute,
\begin{align}
&\frac{d}{dt} \int_{\Sigma_t^i}\phi H_i^2 d \mu_t = \int_{\Sigma_t^i} 2 \phi H_i \frac{\partial H_i}{\partial t} + \phi H_i^2 + \frac{\partial \phi}{\partial t}H_i^2d \mu
\\&=\int_{\Sigma_t^i} -2\phi H_i \Delta \left ( \frac{1}{H_i} \right ) - 2 \phi |A|_i^2 -2 \phi Rc^i(\nu,\nu) + \phi H^2_i + \frac{\partial \phi}{\partial t}H_i^2 d \mu
\\ &=\int_{\Sigma_t^i}-2\phi \frac{|\nabla H_i|^2}{H^2} -2\frac{\hat{g}^i(\nabla \phi, \nabla H_i)}{H_i} - 2 \phi |A|_i^2 -2 \phi Rc^i(\nu,\nu) + \phi H_i^2 + \frac{\partial \phi}{\partial t}H_i^2 d \mu. \label{Eq-lastWeakRicci}
\end{align}
Now by integrating from $a$ to $b$, $0\le a < b \le T$, and rearranging \eqref{Eq-lastWeakRicci} we find that,
\begin{align}
\int_a^b\int_{\Sigma_t}& 2\phi Rc^i(\nu,\nu)d\mu d t=  \int_{\Sigma_a} \phi H_i^2 d\mu -  \int_{\Sigma_b} \phi H_i^2 d\mu \label{WeakIntEst}
\\&+ \int_a^b\int_{\Sigma_t}2\phi\frac{|\nabla H_i|^2}{H_i^2}-2\frac{\hat{g}^i(\nabla \phi, \nabla H_i)}{H_i} +\phi(H_i^2-2|A|_i^2) + \frac{\partial \phi}{\partial t}H_i^2d\mu .
\end{align}
Notice if $m_H(\Sigma_t^i) \rightarrow 0$ then Lemma \ref{naiveEstimate} and  Proposition \ref{avgH} combined with the assumptions on $\phi$ implies,
\begin{align}
\int_a^b\int_{\Sigma_t} &\frac{\partial \phi}{\partial t}H_i^2d\mu dt \rightarrow \int_a^b \int_{\Sigma}\frac{\partial \phi}{\partial t}  \left (4+4r_0^2 e^t \right ) d\sigma dt
\\&=\int_a^b \int_{\Sigma}\frac{\partial}{\partial t}\left [\phi  \left (4+4r_0^2 e^t \right ) \right ] - 4r_0^2 e^td\sigma dt
\\&= \int_{\Sigma_b} \phi(4+r_0^2e^t)d\mu - \int_{\Sigma_a} \phi(4+r_0^2e^t)d\mu - \int_a^b \int_{\Sigma}\phi 4r_0^2 e^td\sigma dt.
\end{align} 

So by using the results of Lemma \ref{naiveEstimate} and Proposition \ref{avgH} we find that,
\begin{align}
\int_a^b\int_{\Sigma_t}& \phi Rc^i(\nu,\nu)d\mu d t \rightarrow - \int_a^b \int_{\Sigma}\phi 2r_0^2 e^td\sigma dt.
\end{align}

Notice if $m_H(\Sigma_T^i)-m_H(\Sigma_0^i) \rightarrow 0$ then,
\begin{align}
\int_a^b\int_{\Sigma_t}& \frac{\partial \phi}{\partial t}H_i^2d\mu dt \rightarrow \int_a^b\int_{\Sigma} \frac{\partial \phi}{\partial t}\left (4 - \frac{8 }{r_0}me^{-t/2} + 4 r_0^2 e^t\right )d\sigma dt 
\\&=\int_a^b\int_{\Sigma} \frac{\partial }{\partial t}\left( \phi\left (4 - \frac{8 }{r_0}me^{-t/2} + 4 r_0^2 e^t\right )\right ) -\frac{4 }{r_0}me^{-t/2} - 4r_0^2 e^t\phi d\sigma dt 
\\&= \int_{\Sigma}\phi(x,b)\left (4 - \frac{8 }{r_0}me^{-b/2} + 4 r_0^2 e^b\right ) - \phi(x,a)\left (4 - \frac{8 }{r_0}me^{-a/2} + 4 r_0^2 e^a\right )d\sigma
\\&- \int_a^b\int_{\Sigma}\left(\frac{4 }{r_0}me^{-t/2} + 4r_0^2 e^t\right)\phi d\sigma dt,
\end{align}
 by Lemma \ref{naiveEstimate} and Proposition \ref{avgH}.
  
 So by combining with \eqref{WeakIntEst} we find that , 
\begin{align}
\int_a^b\int_{\Sigma_t}& \phi Rc^i(\nu,\nu)d\mu d t \rightarrow \int_a^b\int_{\Sigma} -2\left(\frac{1 }{r_0}me^{-t/2} + r_0^2 e^t\right) \phi d\sigma dt.
\end{align}
\end{proof}

In the rest of this section we state two important estimates of IMCF without proof which were proven in the Author's other paper on stability for asymptotically flat manifolds \cite{BA2}.

\begin{Prop}[Proposition 2.9 in \cite{BA2}]\label{avgH}If $\Sigma_t^i$ is a sequence of IMCF solutions where $\int_{\Sigma_t^i} \frac{|\nabla H|^2}{H^2}d \mu \rightarrow 0$ as $i \rightarrow \infty$, $0 < H_0 \le H(x,t) \le H_1 < \infty$ and $|A|(x,t) \le A_0 < \infty$ then
\begin{align}
\int_{\Sigma_t^i} (H_i - \bar{H}_i)^2 d \mu \rightarrow 0
\end{align}
as $i \rightarrow \infty$ for almost every $t \in [0,T]$ where $\bar{H}_i = \dashint_{\Sigma_t^i}H_id \mu$. 

Let $d\mu_t^i$ be the volume form on $\Sigma$ w.r.t. $g^i(\cdot,t)$ then we can find a parameterization of $\Sigma_t$ so that 
\begin{align}
d\mu_t^i = r_0^2 e^t d\sigma
\end{align}
where $d\sigma$ is the standard volume form on the unit sphere.

Then for almost every $t \in [0,T]$ and almost every $x \in \Sigma$, with respect to $d\sigma$, we have that 
\begin{align}
H_i(x,t) - \bar{H}_i(t) \rightarrow  0,
\end{align}
along a subsequence. 

\end{Prop}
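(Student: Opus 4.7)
The plan is to deduce $L^2$-convergence of $H_i - \bar{H}_i$ to zero from the hypothesis $\int_{\Sigma_t^i}|\nabla H_i|^2/H_i^2\, d\mu \to 0$ via a Poincar\'e inequality whose constant is uniform in $i$ and $t \in [0,T]$. Since $H_i \le H_1$, we have $|\nabla H_i|^2 \le H_1^2 |\nabla H_i|^2/H_i^2$, so the hypothesis immediately gives $\int_{\Sigma_t^i}|\nabla H_i|^2\, d\mu \to 0$ for a.e.~$t$. The main task is therefore to establish a Poincar\'e bound of the form
\begin{equation*}
\int_{\Sigma_t^i}(H_i - \bar{H}_i)^2\, d\mu \le C \int_{\Sigma_t^i}|\nabla H_i|^2\, d\mu,
\end{equation*}
with $C$ depending only on $I_0, H_0, H_1, A_1, T, r_0$. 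The IMCF evolution equation $\partial_t g^i = 2A^i/H_i$, combined with $|A|\le A_1$ and $H \ge H_0$, gives a pointwise bound $|(g^i)^{-1} \partial_t g^i| \le 2A_1/H_0$, which integrates to uniform equivalence of $g^i(\cdot,t)$ with $g^i(\cdot,0)$ on $\Sigma$ over $t \in [0,T]$. Since $IN_1(\Sigma_0^i) \ge I_0$ by definition of $\mathcal{M}_{r_0,H_0,I_0}^{T,H_1,A_1}$, this uniform equivalence transports an $L^1$ Poincar\'e inequality to every $\Sigma_t^i$ with controlled constant; the upgrade to $L^2$ then follows from the pointwise bound $|H_i - \bar{H}_i| \le 2H_1$ by the standard interpolation $\|H_i - \bar{H}_i\|_{L^2}^2 \le 2H_1 \|H_i - \bar{H}_i\|_{L^1}$ combined with $L^1$ Poincar\'e and Cauchy-Schwarz.

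For the second claim about the parameterization, observe that both the pulled-back induced volume form $d\mu_t^i$ on $\Sigma$ and the round form $r_0^2 e^t d\sigma$ have total volume $|\Sigma_t^i| = 4\pi r_0^2 e^t$, by the first identity in Lemma \ref{naiveEstimate}. By Moser's theorem on volume forms applied to the convex combination $(1-s) d\mu_t^i + s\, r_0^2 e^t d\sigma$, there exists a smooth diffeomorphism $\psi_t^i \colon \Sigma \to \Sigma$ with $(\psi_t^i)^* d\mu_t^i = r_0^2 e^t d\sigma$. Composing the IMCF parameterization with $\psi_t^i$ produces the desired parameterization of $\Sigma_t^i$ for each fixed $t$.

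The final pointwise convergence follows by passing from the $L^2$ convergence in space, valid for a.e.~$t$, to joint convergence in space-time via dominated convergence: the integrand $\int_{\Sigma_t^i}(H_i - \bar{H}_i)^2 d\mu$ is uniformly bounded by $16\pi r_0^2 H_1^2 e^T$ in $t$, so $\int_0^T \int_\Sigma (H_i - \bar{H}_i)^2\, r_0^2 e^t d\sigma\, dt \to 0$, and extracting a subsequence yields pointwise convergence $H_i(x,t) - \bar{H}_i(t) \to 0$ for $d\sigma \otimes dt$-a.e.~$(x,t)$. The main obstacle is the uniform Poincar\'e estimate in the first step: absent a direct lower bound on the isoperimetric constant of the flowed surfaces $\Sigma_t^i$, one must propagate the initial bound $IN_1(\Sigma_0^i) \ge I_0$ along IMCF, and this is precisely where the assumed bounds $|A|\le A_1$ and $H \ge H_0$ do essential work by controlling the rate at which the intrinsic metric can degenerate.
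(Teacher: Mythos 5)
The proof is stated in this paper only by citation to \cite{BA2}, but the paper's note immediately after Proposition~\ref{avgH} explains the intended construction: the parameterization is induced by a \emph{single} area-preserving diffeomorphism between $(\Sigma, r_0^2\sigma)$ and $(\Sigma, g^i(x,0))$, i.e.\ Moser is applied only at $t=0$. Your first paragraph (Poincar\'e inequality with constant controlled by $IN_1(\Sigma_0^i)\ge I_0$ transported along the flow via the uniform metric equivalence $e^{-2A_1T/H_0}g^i(\cdot,0)\le g^i(\cdot,t)\le e^{2A_1T/H_0}g^i(\cdot,0)$, then interpolating $L^1\to L^2$ using the uniform bound $|H_i-\bar H_i|\le 2H_1$) is sound and is the same mechanism one expects; likewise the final dominated-convergence/subsequence extraction is standard and correct.

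The gap is in the parameterization step. You apply Moser's theorem separately at each $t$ to produce a family of diffeomorphisms $\psi_t^i$, and then reparameterize each slice $\Sigma_t^i$ independently. This misses the key structural fact that under IMCF the area element satisfies $\partial_t d\mu_t = d\mu_t$ (normal speed $1/H$, so $\partial_t d\mu = (1/H)\cdot H\, d\mu$), hence $d\mu_t^i = e^t d\mu_0^i$ \emph{automatically} in the flow parameterization. One therefore only needs a single Moser diffeomorphism at $t=0$ making $d\mu_0^i = r_0^2\,d\sigma$, and the identity $d\mu_t^i = r_0^2 e^t\,d\sigma$ for all $t\in[0,T]$ comes for free with no further reparameterization. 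This is not merely an aesthetic simplification: your time-dependent family $\psi_t^i$ destroys the coherence of the $x$-coordinate across different $t$, so the resulting parameterization is not the one used downstream in Theorems~\ref{gtog1}--\ref{g2tog3}, where $g^i(x,t)$ must be compared to $e^t g^i(x,0)$ at the \emph{same} $x$. With a slice-by-slice reparameterization that comparison is no longer meaningful, and Lemma~\ref{metricEst} (which is stated in flow coordinates) cannot be applied. You should replace the Moser-at-every-$t$ argument with the observation about the IMCF evolution of the area form and a single Moser step at the initial time.
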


\textbf{Note:} From now on we will be using the area preserving parameterization, $F_t^i$, of the solution of IMCF, $\Sigma_t$, explained in the proof of \ref{avgH}, which is induced by an area preserving diffeomorphism between $(\Sigma, r_0^2 \sigma)$ and $(\Sigma, g^i(x,0))$. Notice that the parameterization $F_t^i$ gives a diffeomorphism from $U_T$ to an annulus portion of Euclidean space which allows us to compare $\hat{g}^i$ and $\delta$ on $\Sigma \times [0,T]$.

We end this section with an estimate for the metric of $\Sigma_t^i$ in terms of the bounds on the mean curvature and the second fundamental form.

\begin{Lem}[Lemma 2.11 in \cite{BA2}] \label{metricEst}Assume that $\Sigma_t^i$ is a solution to IMCF and let $\lambda_1^i(x,t)\le \lambda_2^i(x,t)$ be the eigenvalues of $A^i(x,t)$ then we find,
\begin{align}
 e^{\int_0^t\frac{2\lambda^i_1(x,s)}{H^i(x,s)}ds} g^i(x,0) \le g^i(x,t) &\le  e^{\int_0^t\frac{2\lambda^i_2(x,s)}{H^i(x,s)}ds} g^i(x,0).
\end{align}
\end{Lem}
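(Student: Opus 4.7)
The plan is to view the estimate as a pointwise Gronwall comparison for the scalar quantity $\phi_v(t) := g^i(x,t)(v,v)$, for each tangent vector $v$ at the fixed base point $x \in \Sigma$. The essential input is the evolution equation for the induced metric along IMCF, namely
\begin{equation}
\frac{\partial g^i_{jk}}{\partial t}(x,t) = \frac{2 A^i_{jk}(x,t)}{H^i(x,t)},
\end{equation}
which follows from the normal variation formula $\partial_t F = \nu/H$ applied to the first variation of the induced metric. From this, for a fixed $v \in T_x\Sigma$ we immediately obtain
\begin{equation}
\frac{d}{dt}\phi_v(t) = \frac{2 A^i(x,t)(v,v)}{H^i(x,t)}.
\end{equation}

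Next I would compare the right hand side to $\phi_v(t)$ itself using the pointwise bilinear form inequality
\begin{equation}
\lambda_1^i(x,t)\, g^i(x,t) \le A^i(x,t) \le \lambda_2^i(x,t)\, g^i(x,t),
\end{equation}
valid at each time $t$ because $\lambda_1^i \le \lambda_2^i$ are the $g^i(x,t)$-eigenvalues of $A^i(x,t)$. Dividing by $H^i > 0$ (which is positive by the class $\mathcal{M}_{r_0,H_0,I_0}^{T,H_1,A_1}$) yields the scalar differential inequality
\begin{equation}
\frac{2\lambda_1^i(x,t)}{H^i(x,t)}\,\phi_v(t) \le \phi_v'(t) \le \frac{2\lambda_2^i(x,t)}{H^i(x,t)}\,\phi_v(t).
\end{equation}

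I would then apply Gronwall's inequality to both halves, yielding
\begin{equation}
\phi_v(0)\exp\!\left(\int_0^t \frac{2\lambda_1^i(x,s)}{H^i(x,s)}\,ds\right) \le \phi_v(t) \le \phi_v(0)\exp\!\left(\int_0^t \frac{2\lambda_2^i(x,s)}{H^i(x,s)}\,ds\right).
\end{equation}
Since $v$ is arbitrary, unwinding the definition $\phi_v(t) = g^i(x,t)(v,v)$ gives the desired two-sided bilinear form inequality between $g^i(x,t)$ and $g^i(x,0)$. The only subtle point to check is that the eigenvalue bound $\lambda_1^i g^i \le A^i \le \lambda_2^i g^i$ is applied with respect to $g^i(\cdot,t)$ at each time, not with respect to $g^i(\cdot,0)$; however, this causes no trouble because the comparison is between $A^i$ and the instantaneous metric, which is exactly what appears in $\phi_v'(t)$. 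No simultaneous diagonalization of $A^i(x,t)$ and $g^i(x,0)$ is needed, and all steps are pointwise in $x$, so no regularity beyond smoothness of the flow is required.
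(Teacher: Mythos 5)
Your proof is correct, and it is the standard (and almost certainly the intended) argument: differentiate $g^i(x,t)(v,v)$ using the IMCF evolution law $\partial_t g^i_{jk} = 2A^i_{jk}/H^i$, bound $A^i$ between $\lambda_1^i g^i$ and $\lambda_2^i g^i$ as bilinear forms with respect to the instantaneous metric, and integrate the resulting scalar differential inequality via Gronwall. The paper itself omits the proof, citing Lemma 2.11 of \cite{BA2}, but the reasoning there is the same pointwise ODE comparison you have reproduced; your remark about the eigenvalue inequality being taken against $g^i(\cdot,t)$ rather than $g^i(\cdot,0)$ correctly identifies why no simultaneous diagonalization is needed.
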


\section{Convergence To A Warped Product} \label{Sect-Conv}

In this section we define the following metrics on $\Sigma \times [0,T]$,
\begin{align}
\hat{g}^i(x,t) &= \frac{1}{H^i(x,t)^2} dt^2 + g^i(x,t),
\\g_1^i(x,t)&= \frac{1}{\bar{H}^i(t)^2}dt^2 + g^i(x,t),
\\ g_2^i(x,t) &= \frac{1}{\bar{H}^i(t)^2}dt^2 + e^tg^i(x,0),
\end{align}
\begin{align}
 g_3^i(x,t) = \frac{1}{4}\left(1+ \frac{e^{-t}}{r_0^2} \right)^{-1} &dt^2 + e^t g^i(x,0)& 
\\\text{ or } \hspace{0.25cm}g_3^i(x,t) = \frac{1}{4}&\left (\frac{1}{r_0^2}e^{-t} - \frac{2}{r_0^3} m e^{-3t/2}+1 \right )^{-1} dt^2 + e^t g^i(x,0),
\\\bar{g} = \frac{1}{4}\left(1+ \frac{e^{-t}}{r_0^2} \right)^{-1}&dt^2 + r_0^2e^t \sigma
\\\text{ or } \hspace{0.25cm} g_{AdSS} = \frac{1}{4}&\left (\frac{1}{r_0^2}e^{-t}  - \frac{2}{r_0^3} m e^{-3t/2}+1 \right )^{-1} dt^2 + r_0^2e^t \sigma,
\end{align}
and successively show the pairwise convergence of the metrics in $L^2$ from $\hat{g}^i(x,t)$ to $g_3^i(x,t)$. By combining all the pairwise convergence results using the triangle inequality we will find that $\hat{g}^i -g_3^i \rightarrow 0$ in $L^2$. In the next section we will complete the desired results by showing the convergence to $\bar{g}$ or $g_{AdSS}$. 

We start by showing that $\hat{g}^i$ converges to $g_1^i$ by using Proposition \ref{avgH}.

\begin{Thm}\label{gtog1} Let $U_{T}^i \subset M_i^3$ be a sequence s.t. $U_{T}^i\subset \mathcal{M}_{r_0,H_0,I_0}^{T,H_1,A_1}$ and $m_H(\Sigma_{T}^i) \rightarrow 0$ as $i \rightarrow \infty$ or $m_H(\Sigma_{T}^i)- m_H(\Sigma_{0}^i) \rightarrow 0$ and $m_H(\Sigma_t^i)\rightarrow m > 0$. If we define the metrics, 
\begin{align}
\hat{g}^i(x,t) &= \frac{1}{H_i(x,t)^2} dt^2 + g^i(x,t),
\\g^i_1(x,t)&= \frac{1}{\overline{H}_i(t)^2}dt^2 + g^i(x,t),
\end{align}
 on $U_T^i$ then we have that,
\begin{align}
\int_{U_T^i}|\hat{g}^i -g^i_1|^2 dV \rightarrow 0 ,
\end{align}
as $i \rightarrow \infty$ where $dV$ is the volume form on $U_T^i$. 
\end{Thm}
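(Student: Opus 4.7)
The plan is to observe that $\hat g^i$ and $g_1^i$ differ only in their $dt\otimes dt$ component, so the tensor difference reduces to a scalar multiple of $dt\otimes dt$, and then reduce the $L^2$ estimate to Proposition \ref{avgH}. Explicitly,
\begin{align*}
\hat g^i - g_1^i = \left(\frac{1}{H_i^2}-\frac{1}{\bar H_i^2}\right)dt\otimes dt = \frac{(\bar H_i - H_i)(\bar H_i + H_i)}{H_i^2\,\bar H_i^2}\,dt\otimes dt.
\end{align*}
Computing the pointwise squared norm with respect to the reference metric $\hat g^i$ (which is block-diagonal in $(t,\Sigma)$ coordinates with $(\hat g^i)^{tt}=H_i^2$) and using the uniform bounds $H_0 \le H_i \le H_1$ from the class $\mathcal{M}_{r_0,H_0,I_0}^{T,H_1,A_1}$ (which force $H_0\le \bar H_i \le H_1$ by averaging), I would derive the pointwise estimate
$$|\hat g^i - g_1^i|^2_{\hat g^i} = \frac{(\bar H_i - H_i)^2(\bar H_i + H_i)^2}{\bar H_i^4} \le \frac{4H_1^2}{H_0^4}(H_i - \bar H_i)^2.$$

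Next, writing $dV_{\hat g^i} = H_i^{-1}\,d\mu_t\,dt$ and using $H_i\ge H_0$, Fubini gives
$$\int_{U_T^i}|\hat g^i - g_1^i|^2\,dV \le \frac{4H_1^2}{H_0^5}\int_0^T\!\!\int_{\Sigma_t^i}(H_i-\bar H_i)^2\,d\mu\,dt,$$
which reduces the theorem to the hypothesis--conclusion pair supplied by Proposition \ref{avgH}. The hypotheses of Proposition \ref{avgH} are verified in both cases of interest: Corollary \ref{GoToZero} yields $\int_{\Sigma_t^i}|\nabla H_i|^2/H_i^2\,d\mu\to 0$ for almost every $t$, while the required $C^0$ bounds on $H$ and $|A|$ are built into $\mathcal{M}_{r_0,H_0,I_0}^{T,H_1,A_1}$. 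Proposition \ref{avgH} therefore gives $\int_{\Sigma_t^i}(H_i - \bar H_i)^2\,d\mu \to 0$ for a.e.\ $t \in [0,T]$.

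Finally, dominated convergence on $[0,T]$ closes the argument: the crude uniform bound $(H_i-\bar H_i)^2 \le 4H_1^2$ together with Lemma \ref{naiveEstimate}'s area identity $|\Sigma_t^i| = 4\pi r_0^2 e^t$ produces the constant majorant $16\pi r_0^2 H_1^2 e^T$ for the inner integral as a function of $t$, so the iterated integral above tends to $0$. This is essentially a direct reduction; the main step requiring care is merely book-keeping the a.e.--$t$ nature of Proposition \ref{avgH} and assembling the uniform majorant for dominated convergence, rather than any delicate geometric estimate.
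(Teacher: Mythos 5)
Your argument is correct and mirrors the paper's: the difference $\hat g^i - g_1^i$ is a scalar multiple of $dt\otimes dt$, its squared norm against $\hat g^i$ works out to $(\bar H_i^2 - H_i^2)^2/\bar H_i^4$, the uniform bounds $H_0\le H_i,\bar H_i\le H_1$ from the class $\mathcal{M}_{r_0,H_0,I_0}^{T,H_1,A_1}$ reduce the $L^2$ estimate to a constant times $\int_0^T\int_{\Sigma_t^i}(H_i-\bar H_i)^2\,d\mu\,dt$, and Proposition \ref{avgH} together with dominated convergence in $t$ finishes. One small point you leave implicit, which the paper addresses at the end of its proof: the a.e.-$t$ conclusions of Proposition \ref{avgH} and Corollary \ref{GoToZero} (which descend from $L^1([0,T])$ convergence of a nonnegative quantity to $0$) really hold only along a subsequence in $i$, so you should close with the standard contrapositive step---suppose a subsequence has $\int_{U_T^k}|\hat g^k - g_1^k|^2\,dV\ge\epsilon$, apply the above reasoning to that subsequence to extract a further subsequence along which the integral tends to $0$, contradiction---to upgrade from subsequential to full-sequence convergence.
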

\begin{proof}
We compute,
\begin{align}
\int_{U_T^i}|\hat{g}^i -g^i_1|^2 dV &= \int_0^T \int_{\Sigma_t^i} \frac{|\hat{g}^i -g^i_1|^2}{H} d\mu dt 
\\&=\int_0^T \int_{\Sigma_t^i} \frac{1}{H_i} \left | \frac{1}{H_i^2} - \frac{1}{\bar{H}_i^2}\right |^2d\mu dt 
\\&= \int_0^T \int_{\Sigma_t^i} \frac{|\bar{H}_i^2-H_i^2|^2}{H_i^3\bar{H}_i^2} d\mu dt 
\\&\le \frac{1}{H_0^5}\int_0^T \int_{\Sigma_t^i} |\bar{H}_i^2-H_i^2|^2 d\mu dt,  \label{Eq-1Lastgtog1}
\end{align}
where the convergence in \eqref{Eq-1Lastgtog1} follows from the pointwise convergence for almost every $t \in [0,T]$ and almost every $x \in \Sigma_t$ w.r.t $d\mu_t^{\infty}$, for a subsequence, from Proposition \ref{avgH} as well as the fact that $H_i\le H_1$ and Lebesgue's dominated convergence theorem.

We can get rid of the need for a subsequence by assuming to the contrary that for $\epsilon > 0$ there exists a subsequence so that $\int_{U_T^k}|\hat{g}^k -g^k_1|^2 dV \ge \epsilon$, but this subsequence satisfies the hypotheses of Theorem \ref{gtog1} and hence by what we have just shown we know a subsequence must converge which is a contradiction.
\end{proof}

Now we show $L^2$ convergence of $g_1^i$ to $g_2^i$ using the estimate of the metric tensor of the hypersurface $\Sigma_t$ given in Lemma \ref{metricEst}.
\begin{Thm}\label{g1tog2} Let $U_{T}^i \subset M_i^3$ be a sequence s.t. $U_{T}^i\subset \mathcal{M}_{r_0,H_0,I_0}^{T,H_1,A_1}$ and $m_H(\Sigma_{T}^i) \rightarrow 0$ as $i \rightarrow \infty$ or $m_H(\Sigma_{T}^i)- m_H(\Sigma_{0}^i) \rightarrow 0$ and $m_H(\Sigma_t^i)\rightarrow m > 0$. If we define the metrics,
\begin{align}
g^i_1(x,t)&= \frac{1}{\overline{H}_i(t)^2}dt^2 + g^i(x,t),
\\g^i_2(x,t)&= \frac{1}{\overline{H}_i(t)^2}dt^2 + e^tg^i(x,0),
\end{align}
 on $U_T^i$ then we have that,
\begin{align}
\int_{U_T^i}|g^i_1 -g^i_2|_{g_3^i}^2 dV \rightarrow 0 ,
\end{align}
as $i \rightarrow \infty$ where $dV$ is the volume form on $U_T^i$ and the norm is being calculated w.r.t. the metric $g^i_3(x,t)= \frac{r_0^2}{4}e^tdt^2 + e^tg^i(x,0)$.

Similarly, if we define,
\begin{align}
g^i_{2'}(x,t)= \frac{1}{\overline{H}_i(t)^2}dt^2 + e^{t-T}g^i(x,T),
\end{align}
 on $U_T^i$ then we have that,
\begin{align}
\int_{U_T^i}|g^i_1 -g^i_{2'}|_{g_3^i}^2 dV \rightarrow 0 ,
\end{align}
 as $i \rightarrow \infty$ where $dV$ is the volume form on $U_T^i$.
\end{Thm}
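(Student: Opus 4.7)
The plan is to reduce the claim to a pointwise tensor estimate, apply Lemma \ref{metricEst}, and conclude with the $L^2$ smallness of $\lambda_1^i-\lambda_2^i$ furnished by Corollary \ref{GoToZero} together with dominated convergence. The tensors $g_1^i$ and $g_2^i$ have identical $dt^2$ components, so $g_1^i-g_2^i = g^i(x,t)-e^t g^i(x,0)$ is a purely spatial symmetric $2$-tensor. Since $g_3^i = \tfrac{r_0^2}{4}e^t dt^2 + e^t g^i(x,0)$ is block-diagonal with spatial block $e^t g^i(x,0)$, one computes
\begin{equation*}
|g_1^i - g_2^i|_{g_3^i}^2 \;=\; e^{-2t}\,|g^i(x,t) - e^t g^i(x,0)|_{g^i(x,0)}^2.
\end{equation*}
Lemma \ref{metricEst} says the eigenvalues of $g^i(x,t)$ relative to $g^i(x,0)$ lie in $[e^{\int_0^t 2\lambda_1^i/H^i\,ds},\,e^{\int_0^t 2\lambda_2^i/H^i\,ds}]$. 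Using $\lambda_1^i + \lambda_2^i = H^i$, each exponent equals $t \pm E^i(x,t)$ where $E^i(x,t)=\int_0^t\frac{\lambda_2^i-\lambda_1^i}{H^i}\,ds$, so the eigenvalues of $e^{-t}(g^i(x,t)-e^t g^i(x,0))$ relative to $g^i(x,0)$ lie in $[e^{-E^i}-1,\,e^{E^i}-1]$.

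Since $|\lambda_j^i|\le A_1$ and $H^i\ge H_0$, the function $E^i$ is bounded uniformly by $2A_1T/H_0$, whence $|e^{\pm E^i}-1|\le C|E^i|$. Combined with Cauchy--Schwarz in $s$ this gives the pointwise bound
\begin{equation*}
|g_1^i - g_2^i|_{g_3^i}^2 \;\le\; C\,(E^i(x,t))^2 \;\le\; \frac{CT}{H_0^2}\int_0^T (\lambda_1^i-\lambda_2^i)^2(x,s)\,ds.
\end{equation*}
Integrating over $U_T^i$, using that $dV$ is comparable to $\frac{1}{H}d\mu_t\,dt$ with $d\mu_t = r_0^2 e^t d\sigma$ from the area-preserving parameterization of Proposition \ref{avgH}, and applying Fubini, we obtain (absorbing constants into $C_T$)
\begin{equation*}
\int_{U_T^i}|g_1^i-g_2^i|_{g_3^i}^2\,dV \;\le\; C_T\int_0^T e^{-s}\int_{\Sigma_s^i}(\lambda_1^i-\lambda_2^i)^2\,d\mu_s\,ds.
\end{equation*}
Corollary \ref{GoToZero} forces $\int_{\Sigma_s^i}(\lambda_1^i-\lambda_2^i)^2 d\mu_s \to 0$ for a.e.\ $s\in[0,T]$, and the integrand is dominated by $4A_1^2\,e^{-s}|\Sigma_s^i| = 4A_1^2 r_0^2$. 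Dominated convergence finishes the argument, with the passage from a subsequence to the full sequence handled exactly as in Theorem \ref{gtog1}.

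For the $g_{2'}^i$ estimate the same strategy applies verbatim, with Lemma \ref{metricEst} used on $[t,T]$ rather than $[0,t]$: one compares $g^i(x,t)$ and $e^{t-T}g^i(x,T)$ via $e^{\int_t^T 2\lambda_j^i/H^i\,ds}g^i(x,t)$, so the relevant error exponent becomes $\int_t^T\frac{\lambda_2^i-\lambda_1^i}{H^i}ds$, still bounded above by $\int_0^T(\lambda_1^i-\lambda_2^i)^2 ds$ up to a constant, and the rest is identical. There is no serious obstacle here; the only point requiring care is the cancellation of the $e^{-t}$ factor arising from the inverse of the spatial block of $g_3^i$ with the $e^t$ factor produced by Lemma \ref{metricEst}, which is precisely what reduces the $L^2$ convergence to the $L^1_t L^1_x$ smallness of $(\lambda_1^i-\lambda_2^i)^2$ delivered by the monotonicity of the Hawking mass (Lemma \ref{CrucialEstimate}).
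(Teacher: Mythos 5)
Your proposal is correct but follows a genuinely different and cleaner route than the paper. The paper bounds the integrand as you do via Lemma \ref{metricEst}, but then invokes Proposition \ref{avgH} and Corollary \ref{GoToZero} to extract a subsequence along which $H_i$ and $\lambda_p^i$ converge pointwise a.e.\ to $2\sqrt{e^{-t}/r_0^2+1}$, concluding $\tfrac{2\lambda_p^i}{H_i}\to 1$ a.e.\ and finishing with dominated convergence applied to the $(x,t)$-integral. You instead exploit the algebraic identity $\tfrac{2\lambda_1^i}{H^i}=1-\tfrac{\lambda_2^i-\lambda_1^i}{H^i}$ and $\tfrac{2\lambda_2^i}{H^i}=1+\tfrac{\lambda_2^i-\lambda_1^i}{H^i}$, which cancels the $e^t$ factor exactly and reduces the integrand to a function of $E^i=\int_0^t\tfrac{\lambda_2^i-\lambda_1^i}{H^i}ds$ alone; Cauchy--Schwarz then gives a pointwise bound by $\int_0^T(\lambda_1^i-\lambda_2^i)^2 ds$. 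This avoids the subsequence extraction and a.e.\ convergence of $\lambda_p^i$ entirely, and it is quantitative: after Fubini, the right-hand side is bounded by a constant times $\int_0^T\int_{\Sigma_s^i}(\lambda_1^i-\lambda_2^i)^2\,d\mu_s\,ds$, which Lemma \ref{CrucialEstimate} controls directly by the Hawking mass deficit $m_H(\Sigma_T^i)-m_H(\Sigma_0^i)$ (using $|\Sigma_t|^{1/2}\ge 2\sqrt{\pi}\,r_0$). In fact, the final appeal to a.e.\ convergence and dominated convergence in your last display is unnecessary for the same reason---you already have an explicit integral bound---so your approach buys both a simpler proof and an explicit rate of convergence in terms of the mass gap, neither of which the paper's argument provides. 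The $g_{2'}^i$ case by time reversal of Lemma \ref{metricEst} is handled at the same level of detail as in the paper.
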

\begin{proof}
We compute,
\begin{align}
&\int_{U_T^i}|g^i_1 -g^i_2|^2 dV = \int_0^T \int_{\Sigma_t^i} \frac{|g^i_1 -g^i_2|^2}{H_i} d\mu dt 
\\&=\int_0^T \int_{\Sigma_t^i}e^{-2t}  \frac{|g^i(x,t) -e^tg^i(x,0)|_{g^i(x,0)}^2}{H_i}d\mu dt 
\\&\le \int_0^T \int_{\Sigma_t^i} e^{-2t}\frac{|g^i(x,0)|_{g^i(x,0)}^2}{H_i}\max\{|e^{\int_0^t\frac{2\lambda^i_1(x,s)}{H^i(x,s)}ds} -e^t|^2,|e^{\int_0^t\frac{2\lambda^i_2(x,s)}{H^i(x,s)}ds} -e^t|^2\} d\mu dt 
\\&\le \frac{n^2}{H_0}\int_0^T\int_{\Sigma} e^{-2t} \max\{|e^{\int_0^t\frac{2\lambda^i_1(x,s)}{H^i(x,s)}ds} -e^t|^2,|e^{\int_0^t\frac{2\lambda^i_2(x,s)}{H^i(x,s)}ds} -e^t|^2\}  d\mu dt, \label{Eq-2lastg1tog2}
\end{align}
where the convergence in \eqref{Eq-2lastg1tog2} follows from Proposition \ref{avgH} since $H_i \longrightarrow \bar{H} = 2\sqrt{\frac{e^{-t}}{r_0^2}+1}$  and $\lambda_1^i \rightarrow \lambda_2^i$ pointwise almost everywhere with respect to $d \sigma$ along a subsequence. So we have that  $\lambda_p^i \rightarrow \sqrt{\frac{e^{-t}}{r_0^2}+1}$, $p = 1,2$, for almost every $x \in \Sigma_t$ and for almost every $t \in [0,T]$ along a subsequence. This implies that $\frac{2\lambda_p^i}{H_i} \rightarrow 1$, $p = 1,2$, for almost every $x \in \Sigma_t$ and  for almost every $t \in [0,T]$ along a subsequence. Combining this with the estimate $\frac{2\lambda_p^i}{H_i} \le \frac{2A_0}{H_0}$ and Lebesgue's dominated convergence theorem we find the desired convergence above.

We can get rid of the need for a subsequence by assuming to the contrary that for $\epsilon > 0$ there exists a subsequence so that $\int_{U_T^k}|g_1^k -g^k_2|_{g_3^i}^2 dV \ge \epsilon$, but this subsequence satisfies the hypotheses of Theorem \ref{g1tog2} and hence by what we have just shown we know a subsequence must converge which is a contradiction.

We can obtain the convergence result in the case where $m_H(\Sigma_{T}^i)- m_H(\Sigma_{0}^i) \rightarrow 0$ and $m_H(\Sigma_t^i)\rightarrow m$ in a similar fasion by using the estimates of Proposition \ref{avgH} as well as Lemma \ref{GoToZero}.

 Using a similar argument, as well as the time $T$ estimate from Lemma \ref{metricEst}, we can get the second convergence result for $g_{2'}^i$.
\end{proof}

Notice that in Theorem \ref{gtog1} we were able to leverage the results of Proposition \ref{avgH} in order to gain control of the radial portion of the metric $\hat{g}^i$ as $i \rightarrow \infty$. Now we want to use the fact that we know that the average of the mean curvature is converging to that of a sphere in hyperbolic space (or ADSS) in order to complete the convergence to the warped product $g_3^i$.

\begin{Thm}\label{g2tog3} Let $U_{T}^i \subset M_i^3$ be a sequence s.t. $U_{T}^i\subset \mathcal{M}_{r_0,H_0,I_0}^{T,H_1,A_1}$ and $m_H(\Sigma_{T}^i) \rightarrow 0$ as $i \rightarrow \infty$. If we define the metrics, 
\begin{align}
g^i_2(x,t)&= \frac{1}{\bar{H}^i(t)^2}dt^2 + e^tg^i(x,0),
\\g^i_3(x,t)&= \frac{1}{4}\left(1+ \frac{e^{-t}}{r_0^2} \right)^{-1}dt^2 + e^tg^i(x,0),
\end{align}
 on $U_T^i$ then we have that,
\begin{align}
\int_{U_T^i}|g^i_2 -g^i_3|^2 dV \rightarrow 0 ,
\end{align}
as $i \rightarrow \infty$ where $dV$ is the volume form on $U_T^i$.

Instead, if $m_H(\Sigma_{T}^i)- m_H(\Sigma_{0}^i) \rightarrow 0$ and $m_H(\Sigma_t^i)\rightarrow m > 0$ and we define ,
\begin{align}
g^i_3(x,t)= \frac{1}{4}\left (\frac{1}{r_0^2}e^{-t} - \frac{2}{r_0^3} m e^{-3t/2}+1 \right )^{-1} dt^2 + e^tg^i(x,0),
\end{align} 
on $U_T^i$ then we have that,
\begin{align}
\int_{U_T^i}|g^i_2 -g^i_3|^2 dV \rightarrow 0 ,
\end{align}
as $i \rightarrow \infty$ where $dV$ is the volume form on $U_T^i$.
\end{Thm}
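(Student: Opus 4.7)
The plan is to reduce this theorem to a one-variable convergence statement for the scalar $1/\bar{H}_i(t)^2$. Since $g_2^i$ and $g_3^i$ agree on the spatial factor $e^t g^i(x,0)$ and differ only in the $dt\otimes dt$ term, following the norm convention used in Theorem \ref{gtog1}, we have $|g_2^i-g_3^i|^2 = |1/\bar{H}_i(t)^2-\Phi(t)|^2$, where $\Phi(t) := \frac{1}{4}\left(1+\frac{e^{-t}}{r_0^2}\right)^{-1}$ in the PMT case and $\Phi(t) := \frac{1}{4}\left(\frac{1}{r_0^2}e^{-t}-\frac{2}{r_0^3}me^{-3t/2}+1\right)^{-1}$ in the RPI case. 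Hence
\[
\int_{U_T^i} |g_2^i-g_3^i|^2 \, dV = \int_0^T \int_{\Sigma_t^i} \frac{1}{H_i} \left|\frac{1}{\bar{H}_i^2}-\Phi(t)\right|^2 d\mu \, dt \le \frac{|\Sigma_0|\,e^T}{H_0} \int_0^T \left|\frac{1}{\bar{H}_i(t)^2}-\Phi(t)\right|^2 dt,
\]
using $|\Sigma_t^i| = |\Sigma_0|e^t$ from Lemma \ref{naiveEstimate} together with $H_i\ge H_0$; this reduces the task to showing $\int_0^T |1/\bar{H}_i^2 - \Phi(t)|^2 \, dt \to 0$.

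Next, I will identify the pointwise limit of $\bar{H}_i(t)^2$. Expanding the square, one has
\[
\int_{\Sigma_t^i}(H_i-\bar{H}_i)^2 \, d\mu = \int_{\Sigma_t^i} H_i^2 \, d\mu - |\Sigma_t^i|\bar{H}_i^2 = |\Sigma_t^i|\left(\overline{H_i^2}(t) - \bar{H}_i(t)^2\right),
\]
and since Proposition \ref{avgH} forces the left side to $0$ for a.e.\ $t\in[0,T]$, we obtain $\overline{H_i^2}(t) - \bar{H}_i(t)^2 \to 0$. Combined with \eqref{unifAvgHEst1} in the PMT case, or \eqref{unifAvgHEst2} in the RPI case, from Lemma \ref{naiveEstimate}, this yields $\bar{H}_i(t)^2 \to 1/\Phi(t)$, and therefore $1/\bar{H}_i(t)^2 \to \Phi(t)$ pointwise a.e.\ in $t\in[0,T]$.

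Finally, because $\bar{H}_i \ge H_0 > 0$, the integrand $|1/\bar{H}_i(t)^2 - \Phi(t)|^2$ is uniformly bounded on $[0,T]$ by a constant independent of $i$, so the dominated convergence theorem delivers the desired $L^2([0,T])$ convergence. As in Theorems \ref{gtog1} and \ref{g1tog2}, a subsequence contradiction argument removes any dependence on passage to a subsequence. I do not expect a serious obstacle at this stage: the genuine difficulty was already absorbed in Theorem \ref{gtog1} (removing the $x$-dependence from the $dt^2$ coefficient via Proposition \ref{avgH}) and in Theorem \ref{g1tog2} (replacing $g^i(x,t)$ with $e^t g^i(x,0)$), so only the one-dimensional pointwise limit of $\bar{H}_i(t)^2$ remains to be controlled, and this is immediate from the Hawking-mass hypothesis.
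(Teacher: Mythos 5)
Your proposal is correct and takes essentially the same route as the paper: reduce the $L^2$ tensor comparison to a one-variable scalar estimate for $1/\bar{H}_i(t)^2 - \Phi(t)$, then invoke Lemma~\ref{naiveEstimate} and dominate. However, you have spotted and repaired a small gap that the paper's proof glosses over. The paper's final inequality bounds the integral by an expression involving $|4(1+e^{-t}/r_0^2)-\bar{H}_i^2|$ and then cites \eqref{unifAvgHEst1} for the convergence, but \eqref{unifAvgHEst1} controls $\overline{H_i^2}=\dashint H_i^2\,d\mu$, not $\bar{H}_i^2=(\dashint H_i\,d\mu)^2$, which is the quantity actually appearing in $g_2^i$. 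Your intermediate step -- writing $\dashint(H_i-\bar{H}_i)^2\,d\mu = \overline{H_i^2}-\bar{H}_i^2$ and using Proposition~\ref{avgH} to kill the variance -- is exactly what is needed to bridge from $\overline{H_i^2}\to 1/\Phi$ to $\bar{H}_i^2\to 1/\Phi$, and the paper relies on this implicitly without saying so. You also explicitly carry out the RPI case and the dominated-convergence and subsequence-removal steps, which the paper compresses into a one-line citation. In short, same skeleton as the paper, but a more complete argument.
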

\begin{proof}
We calculate,
\begin{align}
&\int_{U_T^i}|\hat{g}_2^i -g^i_3|^2 dV = \int_0^T \int_{\Sigma_t^i} \frac{|\hat{g}_2^i -g^i_3|^2}{H} d\mu dt 
\\&=\int_0^T \int_{\Sigma_t^i} \frac{1}{H} \left |  \frac{1}{\bar{H}^2}- \frac{1}{4}\left(1+ \frac{e^{-t}}{r_0^2} \right)^{-1}\right |d\mu dt 
\\&= \int_0^T \int_{\Sigma_t^i} \frac{1}{4}\left(1+ \frac{e^{-t}}{r_0^2} \right)^{-1}\frac{|4\left(1+ \frac{e^{-t}}{r_0^2} \right)-\bar{H}^2|}{H\bar{H}^2} d\mu dt 
\\&\le \frac{1}{H_0^34}\int_0^T \int_{\Sigma}\left(1+ \frac{e^{-t}}{r_0^2} \right)^{-1} |4\left(1+ \frac{e^{-t}}{r_0^2} \right)-\bar{H}^2| d\mu dt, \label{Eq-last1}
\end{align}
where the convergence in \eqref{Eq-last1} follows from Lemma \ref{naiveEstimate}, \eqref{unifAvgHEst1}.

\end{proof}

\section{Convergence to Hyperbolic/Anti-deSitter Schwarzschild Space}\label{sect-End}

In this section we will complete the proofs of Theorems \ref{SPMT} and \ref{SRPI}. Note that the results of the last section are enough to prove stability in the rotationally symmetric case due to the fact that in the rotationally symmetric case we know that $(\Sigma, g^i(x,t))$ must be a round sphere by assumption. In the rotationally symmetric case the $L^2$ convergence is a weaker result than the work of Sakovich and Sormani \cite{SS} but it seems like the convergence to a warped product shown in the last section could be useful for the general case. It is work in progress with Christina Sormani to understand the relationship between $L^2$ metric convergence and intrinsic flat convergence in order to relate the results of this paper to the general conjecture stated in \cite{SS}.

In the more general case addressed by Theorems \ref{SPMT} and \ref{SRPI} we need to show that $(\Sigma, g^i(x,t))$ converges to a round sphere. In this section we will be able to show that the Gauss curvature of $\Sigma_t^i$ converges to that of a round sphere and so in order to complete the proofs of Theorems \ref{SPMT} and \ref{SRPI} we will need the following almost rigidity result of Petersen and Wei (\cite{PW}, Corollary 1.5) which allows us to go from Gauss curvature of $\Sigma_t^i$ converging to a constant to $g^i(x,t)$ converging to $r_0^2e^t\sigma(x)$ in $C^{\alpha}$.

\begin{Cor}(Petersen and Wei \cite{PW})\label{rigidity}
Given any integer $n \ge 2$, and numbers $p > n/2$, $\lambda \in \R$, $v >0$, $D < \infty$, one can find $\epsilon = \epsilon(n,p,\lambda, D) > 0$ such that a closed Riemannian $n-$manifold $(\Sigma,g)$ with
\begin{align}
&\text{vol}(\Sigma)\ge v
\\&\text{diam}(\Sigma) \le D
\\& \frac{1}{|\Sigma|} \int_{\Sigma} \|R - \lambda g \circ g\|^p d \mu \le \epsilon(n,p,\lambda,D)\label{Eq-l2curv}
\end{align}
is $C^{\alpha}$, $\alpha < 2 - \frac{n}{p}$ close to a constant curvature metric on $\Sigma$.
\end{Cor}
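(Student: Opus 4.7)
The plan is to argue by contradiction combined with a harmonic-coordinate compactness scheme in the spirit of Anderson and of Petersen's earlier work. Assume the conclusion fails: there exist parameters $n, p, \lambda, v, D$ as in the hypothesis and a sequence of closed Riemannian $n$-manifolds $(\Sigma_i, g_i)$ satisfying the volume lower bound, the diameter upper bound, and \eqref{Eq-l2curv} with $\epsilon_i \to 0$, yet such that no subsequence is $C^\alpha$-close to a constant curvature metric.

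The first step is to establish a uniform harmonic radius bound. The $L^p$-smallness of $R - \lambda g \circ g$, together with the volume and diameter bounds, controls $\|Rc_i\|_{L^p}$ uniformly: the trace of the Kulkarni--Nomizu term $g \circ g$ is a bounded multiple of $g$, contributing only a harmless fixed $L^p$ mass. With $p > n/2$, the integral Ricci curvature machinery developed by Petersen--Wei in their earlier papers then provides a uniform lower bound on the harmonic radius and on the injectivity radius, producing harmonic coordinate charts of definite size around every point. In such a chart the metric components satisfy the elliptic system of the schematic form $-\tfrac{1}{2} g_i^{kl}\partial_k\partial_l (g_i)_{\alpha\beta} + Q(g_i,\partial g_i) = (Rc_i)_{\alpha\beta}$ with $Q$ quadratic in $\partial g_i$. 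Standard interior $L^p$ elliptic regularity upgrades the $L^p$ Ricci bound to uniform $W^{2,p}$ bounds on $(g_i)_{\alpha\beta}$, and the Sobolev embedding $W^{2,p} \hookrightarrow C^{0,\alpha}$ for $\alpha < 2-n/p$ gives precompactness of the sequence in $C^\alpha$.

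The next step is to identify the limit. Passing to a subsequence, $g_i \to g_\infty$ in $C^\alpha$ in coordinates, with $g_\infty$ of class $W^{2,p}$. By weak lower semicontinuity of the $L^p$ norm and the $C^\alpha$ convergence, the distributional Riemann tensor of $g_\infty$ satisfies $R_\infty - \lambda g_\infty \circ g_\infty = 0$, i.e.\ $g_\infty$ has constant sectional curvature $\lambda$. A standard regularity bootstrap promotes $g_\infty$ to a smooth metric, which must then be a space form. This contradicts the assumption that no subsequence of $(\Sigma_i, g_i)$ is $C^\alpha$-close to a constant curvature metric, finishing the proof.

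The main technical obstacle sits entirely in the first step: producing the uniform harmonic radius and the ensuing $W^{2,p}$ control on the metric from only an integral rather than pointwise Ricci bound. This is the heart of the Petersen--Wei approach and rests on their relative volume comparison under integral curvature hypotheses, together with a careful iteration bootstrapping the a priori regularity of the metric in harmonic gauges. Once that compactness machinery is in place, passing to the limit and recognizing the limit as a space form is routine. A minor subtlety is that \eqref{Eq-l2curv} bounds the full curvature tensor only relative to the constant-curvature model, not in absolute terms, but the triangle inequality combined with the volume and diameter bounds absorbs the $\lambda g \circ g$ piece at negligible cost.
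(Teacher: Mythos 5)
The paper itself gives no proof of this statement: it is quoted verbatim as a cited result of Petersen and Wei (their Corollary~1.5), so there is no ``paper's own proof'' to compare you against. Judged against the original Petersen--Wei argument, your outline does follow the standard contradiction-plus-compactness strategy that underlies almost all $L^p$-pinching theorems of this type, and the overall skeleton --- harmonic coordinates, $W^{2,p}$ elliptic estimates from the Ricci equation, precompactness in $C^{\alpha}$ by Sobolev embedding, identification of the limit as a space form via weak lower semicontinuity --- is the right one. The key structural insight (that the $L^p$ bound on the full $(0,4)$ tensor $R-\lambda\, g\circ g$ dominates the $L^p$ Ricci bound after tracing, and that an $\epsilon_i\to 0$ sequence must converge to a constant-curvature limit) is correctly identified.

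Two remarks on precision. First, you assert that the integral Ricci machinery yields a uniform lower bound on the injectivity radius. That is not correct in general: integral Ricci bounds together with volume and diameter bounds give a harmonic radius lower bound (which suffices for the compactness you need), but the injectivity radius can still degenerate under purely integral curvature control. Petersen--Wei work with the harmonic (or $L^{k,p}$-) radius, not the injectivity radius, and you should drop the injectivity radius claim --- it is both unnecessary and unjustified. Second, the harmonic radius estimate under integral (as opposed to pointwise) Ricci bounds is itself nontrivial and is established in the Petersen--Wei integral-curvature program, partly using the relative volume comparison that is the main theorem of the very paper being cited; you rightly flag this as the technical heart, and for the level of detail expected here it is reasonable to treat it as a black box. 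Finally, when passing to the limit, the weak lower semicontinuity step uses weak $W^{2,p}$ convergence of the coordinate expressions of the metric (hence weak $L^p$ convergence of the curvature), not merely the $C^{\alpha}$ convergence you invoke; it would be worth stating this explicitly since the curvature involves second derivatives that $C^{\alpha}$ convergence alone does not control.
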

In our case $n=2$, $p = 2$, $\alpha < 1$ and the Riemann curvature tensor is $R = K g \circ g$, where $g \circ g$ represents the Kulkarni-Nomizu product. This shows that we need to verify that the Gauss curvature of $\Sigma_t$ is becoming constant in order to satisfy \eqref{Eq-l2curv} which is exactly what we will be able to show in the proof of the Corollaries below. Then by combining these results with the rigidity result of Petersen and Wei, Corollary \ref{rigidity}, we are able to complete the proofs of Theorems \ref{SPMT} and \ref{SRPI}. 

Now we prove Theorems \ref{SPMT} under the assumption that $K_{12}^i\ge -1$, the sectional curvature of $M_i$ tangent to $\Sigma_0^i$, for all $i$ which mimics the rotationally symmetric case where the spheres have tangent ambient sectional curvature $\ge -1$ by assumption.

\begin{Cor} \label{PKPMT}
Let $U_{T,i} \subset M_i^3$ be a sequence s.t. $U_{T,i}\subset \mathcal{M}_{r_0,H_0,I_0}^{T,H_1,A_1}$ and $m_H(\Sigma_{T}^i) \rightarrow 0$ as $i \rightarrow \infty$. If in addition we assume that $K^i_{12}(x,0) \ge -1$, the sectional curvature of $M_i^3$ tangent to $\Sigma_0$, then we have that,
\begin{align}
\hat{g}^i\rightarrow \bar{g},
\end{align}
in $L^2$ with respect to the metric $\bar{g}$.
\end{Cor}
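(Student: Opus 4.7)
The plan is to combine the three pairwise $L^2$ bounds established in Section~3 (Theorems~\ref{gtog1}, \ref{g1tog2}, \ref{g2tog3}) via the triangle inequality, which immediately gives $\int_{U_T^i}|\hat g^i-g_3^i|^2\,dV\to 0$, and then separately to prove $\int_{U_T^i}|g_3^i-\bar g|^2\,dV\to 0$. Since the $dt^2$ coefficients of $g_3^i$ and $\bar g$ coincide, the discrepancy is purely spatial, $g_3^i-\bar g = e^t\bigl(g^i(\cdot,0)-r_0^2\sigma\bigr)$, so the problem reduces to showing $g^i(\cdot,0)\to r_0^2\sigma$ in $L^2(\Sigma,\sigma)$.

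To produce this metric convergence I would apply the Petersen--Wei almost-rigidity result (Corollary~\ref{rigidity}) to the initial slice $(\Sigma_0^i,g^i(\cdot,0))$ with $n=2$, $p=2$ and $\lambda = 1/r_0^2$. The volume $|\Sigma_0^i|=4\pi r_0^2$ is built into the class $\mathcal{M}_{r_0,H_0,I_0}^{T,H_1,A_1}$, and the diameter bound $\mathrm{diam}(\Sigma_0^i)\le D$ is supplied directly by hypothesis~\eqref{cond 1}, so the essential remaining ingredient is the $L^2$ curvature smallness $\int_{\Sigma_0^i}(K^i-1/r_0^2)^2\,d\mu\to 0$.

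To establish this smallness, the first step is to exploit Hawking-mass monotonicity: since $0\le m_H(\Sigma_0^i)\le m_H(\Sigma_T^i)\to 0$, Lemma~\ref{naiveEstimate} gives $\int_{\Sigma_0^i}H_i^2\,d\mu\to 16\pi(1+r_0^2)$. Substituting the Gauss equation $K^i = \tfrac14 H_i^2 - \tfrac14(\lambda_1^i-\lambda_2^i)^2 + K_{12}^i$ into Gauss--Bonnet, together with the pointwise sign $K_{12}^i+1\ge 0$ from the hypothesis and $\chi(\Sigma_0^i)\le 2$, should force $\chi(\Sigma_0^i)\to 2$ together with $\int_{\Sigma_0^i}(\lambda_1^i-\lambda_2^i)^2\,d\mu\to 0$ and $\int_{\Sigma_0^i}(K_{12}^i+1)\,d\mu\to 0$. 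The algebraic decomposition
\[
K^i - \tfrac{1}{r_0^2} = (K_{12}^i + 1) - \tfrac14(\lambda_1^i-\lambda_2^i)^2 + \tfrac14\bigl(H_i^2 - 4 - \tfrac{4}{r_0^2}\bigr)
\]
then expresses the error as a sum of three summands, each small in $L^1$.

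The main obstacle is upgrading these $L^1$ convergences to $L^2$. The term $(\lambda_1^i-\lambda_2^i)^2$ is pointwise bounded by $4A_1^2$, so the upgrade is immediate by interpolation; and the $H_i^2$ term can be controlled by applying Proposition~\ref{avgH} at an a.e.\ time $t_*\in(0,T)$ and transporting the estimate back to $t=0$ using Lemma~\ref{metricEst}. The delicate step will be controlling the non-negative summand $K_{12}^i+1$ in $L^2$: the class definition gives a pointwise lower bound on $K_{12}^i$ but no pointwise upper bound. Handling this summand will require using the identity $K_{12}^i = \tfrac12 R^i - Rc^i(\nu,\nu)$, the scalar-curvature lower bound $R^i\ge -6$, and the integral estimate $\int_{\Sigma_0^i}Rc^i(\nu,\nu)+2\,d\mu\to 0$ from Corollary~\ref{GoToZero}, combined with a uniform-integrability or Vitali-type argument. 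Once the $L^2$ curvature smallness is secured, Corollary~\ref{rigidity} yields $g^i(\cdot,0)\to r_0^2\sigma$ in $C^\alpha$ for some $\alpha<1$, hence in $L^2$, and chaining this with the Section~3 bounds by the triangle inequality completes the proof that $\hat g^i\to\bar g$ in $L^2$ with respect to $\bar g$.
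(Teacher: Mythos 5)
Your proposal follows the same overall route as the paper's proof: use Corollary~\ref{GoToZero} at the initial slice to get $\int_{\Sigma_0^i}(K_{12}^i+1)\,d\mu\to 0$, use the pointwise sign hypothesis $K_{12}^i+1\ge 0$ to upgrade to pointwise a.e.\ convergence along a subsequence, combine with the a.e.\ convergence $\lambda_1^i\lambda_2^i\to 1+\tfrac{1}{r_0^2}$ (which comes from Proposition~\ref{avgH} and the $|A|\le A_1$ bound) through the Gauss equation $K^i=K_{12}^i+\lambda_1^i\lambda_2^i$, feed the resulting curvature smallness into the Petersen--Wei rigidity result (Corollary~\ref{rigidity}), and then chain the Section~3 theorems together with the triangle inequality, exactly as the paper does in its closing display. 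Your algebraic decomposition via $\tfrac14 H_i^2$ is equivalent to the paper's via $\lambda_1^i\lambda_2^i$; this is a cosmetic difference.

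The one substantive point worth highlighting is the step you flag as ``delicate'': upgrading the $L^1$ smallness of $K_{12}^i+1$ to the $L^p$ ($p>n/2=1$) smallness that Corollary~\ref{rigidity} actually requires. You are right that this is not automatic: a non-negative sequence converging to zero in $L^1$ and a.e.\ need not converge to zero in $L^p$ for any $p>1$ without an additional equi-integrability input, and the class $\mathcal{M}_{r_0,H_0,I_0}^{T,H_1,A_1}$ gives a pointwise lower bound on $K_{12}^i$ (via the hypothesis) but no pointwise upper bound, since $R^i\ge -6$ and the $|A|$ bound only bound $K_{12}^i=\tfrac{1}{2}R^i-Rc^i(\nu,\nu)$ from one side. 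The paper's own proof does not resolve this: it states the pointwise a.e.\ convergence of $K^i$ and then ``applies'' Petersen--Wei without exhibiting the required integral curvature bound, so you have correctly identified a gap that is also present in the paper. Your suggested route---combining $R^i\ge -6$ with the weak $Rc^i(\nu,\nu)$ convergence and a Vitali-type argument---is a reasonable thing to try, but as written you have not produced the needed equi-integrability, so your proposal is an outline that matches the paper's rather than a closed argument. A related subtlety both you and the paper gloss over: Corollary~\ref{GoToZero} gives the integral convergences only for a.e.\ $t\in[0,T]$, and applying them at the specific slice $t=0$ (where the sign hypothesis lives) needs justification; your suggestion to work at an a.e.\ time $t_*$ and transport to $t=0$ via Lemma~\ref{metricEst} is the right instinct, though it then separates the slice where the sign condition holds from the slice where the integral estimates hold.
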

\begin{proof}
By Lemma \ref{GoToZero} we know that $\int_{\Sigma_0^i} K^i_{12}+1 d \mu \rightarrow 0$ and if we know that $K^i_{12} \ge -1$ then we know that $K_{12}^j+1 \rightarrow 0$ pointwise a.e. on a subsequence. Combining this with the fact that $\lambda_1^j\lambda_2^j \rightarrow \frac{e^{-t}}{r_0^2}+1$ pointwise a.e. and the fact that $K^j = K_{12}^j + \lambda_1^j\lambda_2^j$ yields the desired result. Now we can apply the result of Petersen and Wei \cite{PW}, Corollary \ref{rigidity} which implies that $(\Sigma,g^i(x,0))$ is $C^{\alpha}$, $\alpha < 1$, close to a round sphere of radius $r_0$. So we can put everything together by noticing
\begin{align}
\int_{U_T} |\hat{g}^i - \bar{g}|_{\bar{g}}^2dV &\le \int_{U_T} |\hat{g}^i - \bar{g}|_{g_3^i}^2 +|(g_3^i)^{lm}(g_3^i)^{pq} -\bar{g}^{lm}\bar{g}^{pq}||\hat{g}^i - \bar{g}|_{lp} |\hat{g}^i - \bar{g}|_{mq}dV
\end{align}
where we can show the last term goes to $0$ by using that  $|g_3^i- \bar{g}|_{C^{\alpha}} \rightarrow 0$ as $i \rightarrow \infty$ and noticing that $\int_{U_T}|\hat{g}^i - \bar{g}|_{\delta}^2dV \le C$.
\end{proof}

Now we will prove Theorems \ref{SPMT} and \ref{SRPI} under the assumption of integral Ricci curvature bounds. Also, the Sobolev space $W^{1,2}(\Sigma\times [a,b])$ is defined with respect to the covariant derivative of $\delta$.

\begin{Cor} \label{End1}
Let $U_{T,i} \subset M_i^3$ be a sequence s.t. $U_{T,i}\subset \mathcal{M}_{r_0,H_0,I_0}^{T,H_1,A_1}$ and $m_H(\Sigma_{T}^i) \rightarrow 0$ as $i \rightarrow \infty$. For $[a,b]\subset [0,T]$ if we assume that, 
\begin{align}
\|Rc^i(\nu,\nu)\|_{W^{1,2}(\Sigma\times [a,b])}  \le C,
\end{align}
 and  $diam(\Sigma_t^i) \le D$ for all $i$ and $t \in [a,b]$ then,
\begin{align}
\hat{g}^i \rightarrow \bar{g},
\end{align}
in $L^2$ with respect to the metric $\bar{g}$.
\end{Cor}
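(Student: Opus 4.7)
The plan is to close the triangle-inequality argument initiated in Section \ref{Sect-Conv}. Theorems \ref{gtog1}, \ref{g1tog2} and \ref{g2tog3} already yield $\int_{U_T^i}|\hat g^i-g_3^i|^2\,dV \to 0$, so the remaining task is to prove $g_3^i\to\bar g$ in $L^2(\bar g)$. Because $g_3^i$ and $\bar g$ have identical radial components and the spherical parts differ only by $e^t\bigl(g^i(\cdot,0)-r_0^2\sigma\bigr)$, this reduces to producing uniform convergence $g^i(\cdot,t_0)\to r_0^2 e^{t_0}\sigma$ on $\Sigma$ for some slice $t_0\in[a,b]$ on which the $W^{1,2}$ hypothesis is available; the corresponding statement at $t=0$ follows from Lemma \ref{metricEst}, since Corollary \ref{GoToZero} and Proposition \ref{avgH} force $2\lambda_p^i/H^i\to 1$ almost everywhere, so dominated convergence gives $e^{\int_0^{t_0} 2\lambda_p^i/H^i\,ds}\to e^{t_0}$.

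The uniform convergence at the slice $t_0$ will be produced by Petersen--Wei (Corollary \ref{rigidity}) with $n=2$, $p=2$, $\lambda=r_0^{-2}e^{-t_0}$, once we verify
\begin{align*}
\int_{\Sigma_{t_0}^i}\bigl(K^i - r_0^{-2}e^{-t_0}\bigr)^2\, d\mu \longrightarrow 0,
\end{align*}
together with the diameter bound supplied by hypothesis and the fixed area $|\Sigma_{t_0}^i|=4\pi r_0^2 e^{t_0}$. Writing $K^i=K_{12}^i+\lambda_1^i\lambda_2^i$, the product of principal curvatures is handled by the identity $\lambda_1^i\lambda_2^i=\tfrac14 H_i^2-\tfrac14(\lambda_1^i-\lambda_2^i)^2$: Proposition \ref{avgH} gives $H^i\to 2\sqrt{1+r_0^{-2}e^{-t_0}}$ pointwise a.e., Corollary \ref{GoToZero} gives $(\lambda_1^i-\lambda_2^i)^2\to 0$ in $L^1$, and the $L^\infty$ bound $|A|_i\le A_1$ combined with dominated convergence yields $\lambda_1^i\lambda_2^i\to 1+r_0^{-2}e^{-t_0}$ in $L^q(\Sigma_{t_0}^i)$ for every $q<\infty$.

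The delicate term is $K_{12}^i+1$, and this is where the $W^{1,2}$ hypothesis enters. Lemma \ref{WeakRicciEstimate} identifies the weak $L^2(\Sigma\times[a,b])$-limit of $Rc^i(\nu,\nu)$ as $-2$; combining with the uniform $W^{1,2}$ bound and Rellich--Kondrachov compactness upgrades this to strong $L^2$ convergence on $\Sigma\times[a,b]$, and Sobolev embedding $W^{1,2}(\Sigma\times[a,b])\hookrightarrow L^6$ together with interpolation boosts it to strong $L^q$ convergence for every $q<6$. By Fubini, a subsequence satisfies $Rc^i(\nu,\nu)(\cdot,t_0)\to -2$ in $L^q(\Sigma)$ for almost every $t_0\in[a,b]$. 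From the Gauss identity $K_{12}^i+1=\tfrac12(R^i+6)-(Rc^i(\nu,\nu)+2)$ and $R^i+6\ge 0$ comes the sign-split bound $(K_{12}^i+1)^-\le(Rc^i(\nu,\nu)+2)^+$, so the negative part of $K_{12}^i+1$ decays strongly in $L^q$; since $\int(K_{12}^i+1)\,d\mu\to 0$ by Corollary \ref{GoToZero}, the positive part decays in $L^1$. One then interpolates this $L^1$ decay against the higher integrability supplied by $Rc^i(\nu,\nu)+2$ in $L^q$ (with $q<6$) to obtain the required $L^2$ convergence of $K_{12}^i+1$ to $0$.

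The main obstacle is precisely this last interpolation: the scalar curvature lower bound $R^i\ge -6$ only supplies \emph{a priori} $L^1$ information on $R^i+6$, so promoting the $L^1$ decay of $(K_{12}^i+1)^+$ to $L^2$ relies essentially on the improved $L^q$ integrability of $Rc^i(\nu,\nu)+2$ coming from the $W^{1,2}$ hypothesis through Sobolev embedding on the three-dimensional slab $\Sigma\times[a,b]$, and any slackness in that Sobolev gain forces a careful balance between sign-split and integral control. Once the $L^2$ curvature convergence $K^i\to r_0^{-2}e^{-t_0}$ is in hand, Corollary \ref{rigidity} produces $C^\alpha$ convergence $g^i(\cdot,t_0)\to r_0^2 e^{t_0}\sigma$, Lemma \ref{metricEst} transports it back to $t=0$, and the triangle inequality combined with the chain of estimates from Section \ref{Sect-Conv} concludes $\hat g^i\to\bar g$ in $L^2(\bar g)$.
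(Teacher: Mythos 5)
Your overall architecture matches the paper's: Rellich--Kondrachov compactness from the $W^{1,2}$ bound plus the weak-limit identification of Lemma \ref{WeakRicciEstimate} to get strong $L^2$ convergence $Rc^i(\nu,\nu)+2\to 0$, then the Gauss relation $K^i = K_{12}^i+\lambda_1^i\lambda_2^i$ and $K_{12}^i+1 = \tfrac12(R^i+6)-(Rc^i(\nu,\nu)+2)$ to control $K^i$ in $L^2$ on a slice, then Petersen--Wei (Corollary \ref{rigidity}) to upgrade to $C^\alpha$ metric convergence, and finally the triangle inequality with the chain $\hat g^i\to g_1^i\to g_2^i\to g_3^i$. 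You also correctly observe that the slice produced lies in $[a,b]$ rather than at $t=0$, a point the paper elides.

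The step you flag as delicate is, however, a genuine gap as written, not merely a delicate point. You need $\|K_{12}^i+1\|_{L^2(\Sigma_{t_0})}\to 0$. Your sign-split correctly yields $(K_{12}^i+1)^-\le(Rc^i(\nu,\nu)+2)^+\to 0$ in $L^q$, $q<6$, and $\int(K_{12}^i+1)^+\,d\mu\to 0$ in $L^1$. But the proposed interpolation does not close: $(K_{12}^i+1)^+ \le \tfrac12(R^i+6) + (Rc^i(\nu,\nu)+2)^-$, and while the second summand has $L^q$ control, the first summand $R^i+6\ge 0$ is controlled only in $L^1$ (Corollary \ref{GoToZero}); higher integrability of $Rc^i(\nu,\nu)+2$ gives no higher integrability of $(K_{12}^i+1)^+$, so $L^1\to L^2$ interpolation is unavailable. (The paper's own proof passes over the same spot via $\int|\tfrac12(R+6)-(Rc+2)|^2\le\int\tfrac14(R+6)^2+(Rc+2)^2$, which is false for opposite signs and in any case would require $\int(R^i+6)^2\to 0$, which is never established; so you have correctly located the weak point, but your fix does not repair it from the stated hypotheses.) A secondary soft spot: Lemma \ref{metricEst} with $\lambda_p^i/H^i\to\tfrac12$ a.e. transports the slice-$t_0$ conclusion back to $t=0$ only in an a.e.\ pointwise sense, not in $C^\alpha$; since the ultimate target is only $L^2$, this is repairable, but should be made explicit rather than asserted as $C^\alpha$ convergence at $t=0$.
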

\begin{proof}
By the assumption that $\|Rc^i(\nu,\nu)\|_{W^{1,2}(\Sigma\times \{0\})} \le C$ we also know that $\|Rc^i(\nu,\nu)+2\|_{W^{1,2}(\Sigma\times \{0\})} \le C$ and so by Sobolev embedding we deduce that a subsequence converges strongly in $L^2(\Sigma\times[a,b])$ to a function $k(x,t) \in L^2(\Sigma\times [a,b])$, i.e.
\begin{align}
\int_a^b\int_{\Sigma}|Rc^j(\nu,\nu)+2-k(x,t)|^2 r_0^2e^td\sigma dt \rightarrow 0. 
\end{align}
 By uniqueness of weak limits combined with Lemma \ref{WeakRicciEstimate} we find that,
 \begin{align}
 \int_a^b\int_{\Sigma}|Rc^j(\nu,\nu)+2|^2 r_0^2e^td\sigma dt  \rightarrow 0,
 \end{align} 
 in $L^2$ and hence $\int_{\Sigma}|Rc^j(\nu,\nu)+2|^2r_0^2e^td\sigma dt \rightarrow 0$ for a.e. $t \in [a,b]$ on a subsequence. So if we choose a $t' \in [0,T]$ where the pointwise convergence holds then we have that $\int_{\Sigma}(K_{12}+1)^2 r_0^2e^{t'}d\sigma  \rightarrow 0$ by noticing that,
 \begin{align}
 \int_{\Sigma}(K_{12}+1)^2 r_0^2e^{t'}d\sigma &=  \int_{\Sigma}|\frac{1}{2}(R+6) -(Rc^j(\nu,\nu)+2)|^2 r_0^2e^{t'}d\sigma
 \\&\le  \int_{\Sigma}\frac{1}{4}|(R+6)|^2 +|Rc^j(\nu,\nu)+2|^2 r_0^2e^{t'}d\sigma,
 \end{align}
 and hence, 
\begin{align}
\int_{\Sigma}(K^i-\frac{1}{r_0^2})^2 r_0^2e^{t'}d\sigma &=\int_{\Sigma}(\lambda_1^i\lambda_2^i + K_{12}^i -\frac{1}{r_0^2})^2 r_0^2e^{t'}d\sigma
\\&=\int_{\Sigma}\left((\lambda_1^i\lambda_2^i -(1+\frac{1}{r_0^2}))+ (K_{12}^i +1)\right)^2 r_0^2e^{t'}d\sigma 
\\&\le 2\int_{\Sigma}\left(\lambda_1^i\lambda_2^i -(1+\frac{1}{r_0^2})\right)^2+ \left(K_{12}^i +1\right)^2 r_0^2e^{t'}d\sigma \rightarrow 0. \label{Eq-lastEnd1}
\end{align}
This shows that $\int_{\Sigma} (K^i - \frac{1}{r_0^2})^2 r_0^2e^{t'}d\sigma \rightarrow 0$ and hence by combining with the diameter bound diam$(\Sigma_0^i)\le D$   we can apply the rigidity result of Petersen and Wei \cite{PW}, Corollary \ref{rigidity}, which implies that $|g^i(x,0) - r_0^2\sigma(x)|_{C^{\alpha}} \rightarrow 0$ as $i \rightarrow \infty$ where $\alpha < 1$. This shows that $|g_3^i- \bar{g}|_{C^{\alpha}} \rightarrow 0$ as $i \rightarrow \infty$ where $\alpha < 1$ which also implies $\int_{U_T}|\hat{g}^i - \bar{g}|_{g_3^i}dV \rightarrow 0$ as $i \rightarrow \infty$. So we can put everything together by noticing,
\begin{align}
\int_{U_T} |\hat{g}^i - \bar{g}|_{\bar{g}}^2dV &\le \int_{U_T} |\hat{g}^i - \bar{g}|_{g_3^i}^2 +|(g_3^i)^{lm}(g_3^i)^{pq} -\bar{g}^{lm}\bar{g}^{pq}||\hat{g}^i - \bar{g}|_{lp} |\hat{g}^i - \bar{g}|_{mq}dV,
\end{align}
where we can show the last term goes to $0$ by using that  $|g_3^i- \bar{g}|_{C^{\alpha}} \rightarrow 0$ as $i \rightarrow \infty$ and noticing that $\int_{U_T}|\hat{g}^i - \bar{g}|_{\delta}^2dV \le C$.

Then we can get rid of the need for a subsequence by assuming to the contrary that for $\epsilon > 0$ there exists a subsequence so that $\int_{U_T} |\hat{g}^k - \bar{g}|_{\bar{g}}^2dV \ge \epsilon$, but this subsequence satisfies the hypotheses of Theorem \ref{End1} and hence by what we have just shown we know a further subsequence must converge which is a contradiction.
\end{proof}

Now we finish up by proving a similar theorem in the Riemannian Penrose Inequality case.

\begin{Cor}\label{End2}
Let $U_{T,i} \subset M_i^3$ be a sequence s.t. $U_{T,i}\subset \mathcal{M}_{r_0,H_0,I_0}^{T,H_1,A_1}$ and $m_H(\Sigma_{T}^i)-m_H(\Sigma_0^i) \rightarrow 0$ and $m_H(\Sigma_t) \rightarrow m > 0$ as $i \rightarrow \infty$. For $[a,b]\subset [0,T]$ if we assume that, 
\begin{align}
\|Rc^i(\nu,\nu)\|_{W^{1,2}(\Sigma\times [a,b])}  \le C,
\end{align}
 and $diam(\Sigma_t^i) \le D$ for a all $i$ and $t \in [a,b]$ then 
\begin{align}
\hat{g}^i \rightarrow g_{AdSS}
\end{align}
in $L^2$ with respect to the metric $g_{AdSS}$.
\end{Cor}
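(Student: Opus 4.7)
The proof is a direct analog of Corollary \ref{End1}, with the RPI case of Lemma \ref{WeakRicciEstimate} and the nonzero-mass limits from Lemma \ref{GoToZero} in place of their PMT counterparts, and with $g_{AdSS}$ in place of $\bar g$. The new conceptual feature is that the limits of $Rc^i(\nu,\nu)+2$ and of $\lambda_1^i\lambda_2^i - 1$ both acquire nontrivial $m$-dependent terms, but these cancel exactly in the sum $K^i = K_{12}^i + \lambda_1^i\lambda_2^i$, so that the Gauss curvature of each IMCF leaf still converges to the spherical value $e^{-t}/r_0^2$.

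I would begin by applying Rellich--Kondrachov to the $W^{1,2}(\Sigma\times[a,b])$ bound on $Rc^i(\nu,\nu)$ to extract an $L^2$-convergent subsequence, and then use the RPI case of Lemma \ref{WeakRicciEstimate} together with uniqueness of weak limits to identify
\begin{align*}
Rc^i(\nu,\nu) + 2 \;\longrightarrow\; -\frac{2m}{r_0^3}\, e^{-3t/2} \quad \text{in } L^2\bigl(\Sigma\times[a,b],\, r_0^2 e^t\, d\sigma\, dt\bigr).
\end{align*}
Passing to a further subsequence this gives slice-wise $L^2(\Sigma)$ convergence for almost every $t' \in [a,b]$. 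Combined with $R^i+6 \ge 0$ and $\int_{\Sigma_{t'}^i}(R^i+6)\,d\mu \to 0$ from Lemma \ref{GoToZero}, applying $|a-b|^2 \le 2a^2 + 2b^2$ to the identity
\begin{align*}
K_{12}^i + 1 - \frac{2m}{r_0^3}\, e^{-3t'/2} = \frac{1}{2}(R^i+6) - \Bigl((Rc^i(\nu,\nu)+2) + \frac{2m}{r_0^3}\, e^{-3t'/2}\Bigr)
\end{align*}
yields $\int_\Sigma \bigl(K_{12}^i + 1 - \tfrac{2m}{r_0^3}\, e^{-3t'/2}\bigr)^2 r_0^2 e^{t'}\, d\sigma \to 0$, exactly as in the analogous step of Corollary \ref{End1}.

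In parallel, $(\lambda_1^i-\lambda_2^i)^2 \to 0$ in $L^1$ from Lemma \ref{CrucialEstimate}, together with $|\lambda_p^i| \le A_1$, upgrades to $L^2$ convergence, while Proposition \ref{avgH} gives $H^i \to \bar H$ pointwise a.e.\ with $\bar H^2$ determined by \eqref{unifAvgHEst2}. Through the identity $\lambda_1\lambda_2 = \tfrac14 H^2 - \tfrac14(\lambda_1-\lambda_2)^2$,
\begin{align*}
\lambda_1^i\lambda_2^i \;\longrightarrow\; 1 + \frac{e^{-t'}}{r_0^2} - \frac{2m}{r_0^3}\, e^{-3t'/2} \quad \text{in } L^2(\Sigma).
\end{align*}
Summing with the $K_{12}^i+1$ limit above, the $m$-dependent terms cancel and $K^i \to e^{-t'}/r_0^2$ in $L^2(\Sigma)$, which is exactly the Gauss curvature of the round sphere of radius $r_0 e^{t'/2}$. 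With $\mathrm{diam}(\Sigma_{t'}^i) \le D$, Petersen--Wei (Corollary \ref{rigidity}) applied at time $t'$ gives $|g^i(\cdot,t') - r_0^2 e^{t'}\sigma|_{C^\alpha} \to 0$ for $\alpha < 1$. Propagating this back through Lemma \ref{metricEst} (or, equivalently, using the alternative chain $\hat g^i \to g_1^i \to g_{2'}^i \to g_3^i$ from Section \ref{Sect-Conv} so as to reference the slice where Petersen--Wei was applied), one obtains $g_3^i \to g_{AdSS}$ in $C^\alpha$ on $\Sigma\times[0,T]$. Closing with the triangle inequality
\begin{align*}
\int_{U_T^i}|\hat g^i - g_{AdSS}|_{g_{AdSS}}^2\, dV \le \int_{U_T^i}|\hat g^i - g_{AdSS}|_{g_3^i}^2\, dV + (\text{cross term}),
\end{align*}
where the cross term is controlled by $|g_3^i - g_{AdSS}|_{C^\alpha} \to 0$ and the uniform bound $\int_{U_T^i}|\hat g^i - g_{AdSS}|_\delta^2\, dV \le C$, gives the desired $L^2$ convergence. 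The standard contradiction-on-subsequences argument removes subsequence dependence.

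The main obstacle is promoting the $L^1$-type smallness of $R^i+6$ and $(\lambda_1^i-\lambda_2^i)^2$ supplied by Lemma \ref{GoToZero} to $L^2$ smallness on the fixed slice $\Sigma\times\{t'\}$ required to verify hypothesis \eqref{Eq-l2curv} of Petersen--Wei; this rests on the pointwise bounds $|A|\le A_1$, $H \le H_1$ built into the class $\mathcal{M}_{r_0,H_0,I_0}^{T,H_1,A_1}$. The clean new phenomenon is the exact cancellation of the mass-dependent contributions in the Gauss curvature $K^i$: a nonzero Hawking mass perturbs $K_{12}^i$ and $\lambda_1^i\lambda_2^i$ individually, but the IMCF foliation arranges the cancellation that keeps the intrinsic Gauss curvature of each leaf converging to the spherical value, so the same rigidity machinery as in the PMT case delivers closeness to the round metric and hence to $g_{AdSS}$.
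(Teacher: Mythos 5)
Your proposal follows exactly the route the paper intends: the paper's own proof of Corollary \ref{End2} is the one-line statement ``the proof follows the same line of reasoning as Corollary \ref{End1},'' and you have reconstructed that reasoning with the correct RPI-case substitutions. In particular, you correctly derive from the RPI case of Lemma \ref{WeakRicciEstimate} and uniqueness of weak limits that $Rc^i(\nu,\nu)+2 \to -\tfrac{2m}{r_0^3}e^{-3t/2}$ in $L^2$, you correctly identify the matching $m$-dependent terms in $K_{12}^i+1$ and $\lambda_1^i\lambda_2^i$ and observe their cancellation so that $K^i \to e^{-t'}/r_0^2$, and you correctly note that Petersen--Wei must be applied on a slice $\Sigma\times\{t'\}$ with $t' \in [a,b]$, so the metric convergence has to be propagated to the full $t$-range via Lemma \ref{metricEst} or by pivoting the chain $\hat g^i \to g_1^i \to g_2^i \to g_3^i$ at $t'$ rather than at $t=0$. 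This last point is actually handled more carefully in your write-up than in the paper's proof of Corollary \ref{End1}, which states the $L^2$ curvature convergence at $t'$ but then appeals to $\mathrm{diam}(\Sigma_0^i)\le D$ and concludes about $g^i(\cdot,0)$.

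One point to fix: you identify ``promoting the $L^1$-type smallness of $R^i+6$ and $(\lambda_1^i-\lambda_2^i)^2$ to $L^2$ smallness on the slice'' as the main obstacle and claim that both upgrades ``rest on the pointwise bounds $|A|\le A_1$, $H\le H_1$.'' That is right for $(\lambda_1^i-\lambda_2^i)^2$, where $|(\lambda_1-\lambda_2)^2|\le (2A_1)^2$ gives $\int(\lambda_1-\lambda_2)^4\,d\mu \le 4A_1^2\int(\lambda_1-\lambda_2)^2\,d\mu \to 0$. But the bounds $|A|\le A_1$ and $H\le H_1$ do not give a pointwise bound on the ambient scalar curvature $R^i$, and $R^i+6 = 2(K_{12}^i+1) + 2(Rc^i(\nu,\nu)+2)$ is not pointwise controlled since $K_{12}^i$ is not a priori bounded. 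So the step $\int_\Sigma (R^i+6)^2\,d\mu \to 0$ does not follow from what you cite; one needs some additional integrability of $R^i+6$ (e.g.\ a uniform bound on $\int_\Sigma (R^i+6)^q\,d\mu$ for some $q>2$, which would allow interpolation with the $L^1$ smallness). Note this gap is present in the same place in the paper's proof of Corollary \ref{End1} -- the inequality there estimates $\int(K_{12}+1)^2$ by $\int\tfrac14(R+6)^2 + \int(Rc+2)^2$ and treats the first term as tending to zero without justification -- so your proof faithfully inherits the issue, but your stated resolution of it is not correct.

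Two minor computational remarks for consistency with the metrics you are aiming at: the paper's equation \eqref{unifAvgHEst1} should read $\frac{4}{r_0^2}e^{-t}+4$ (not $\frac{4}{r_0}e^{-t}+4$), and the target constant curvature at time $t'$ in the proof of Corollary \ref{End1} should be $e^{-t'}/r_0^2$ (not $1/r_0^2$); your version has both of these correct, which is further evidence you have reconstructed the intended argument accurately.
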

\begin{proof}
The proof follows the same line of reasoning as the proof of Theorem \ref{End1}.
\end{proof}

Now we finish the paper with the proofs of Corollary \ref{PMTCOR} and Corollary \ref{RPICOR}.

\begin{Cor}\label{PMTCOR2}
Assume for all $M_i$ the smooth solution of IMCF starting at $\Sigma_0$ exists for all time, so that for all $T \in (0, \infty)$, $U_{T,i}\subset \mathcal{M}_{r_0,H_0^T,I_0}^{T,H_1,A_1}$. In addition, assume that $\displaystyle m_H(\Sigma_{\infty}^i) \rightarrow 0$ as $i \rightarrow \infty$ and that $M_i$ are uniformly asymptotically hyperbolic with respect to the IMCF coordinates then
\begin{align}
\hat{g}^i \rightarrow \bar{g}
\end{align}
on $\Sigma\times[0,T]$ in $L^2$ with respect to $\bar{g}$.
\end{Cor}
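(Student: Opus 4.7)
The plan is to reduce Corollary \ref{PMTCOR2} to Corollary \ref{End1} by locating an auxiliary interval $[a,b]$ deep in the asymptotically hyperbolic end on which the Ricci and diameter hypotheses of that corollary are automatically satisfied. Concretely, given the fixed $T$, I would choose $T^\ast>T$ large and set $a=T^\ast$, $b=T^\ast+1$, verify the hypotheses of Corollary \ref{End1} on the larger interval $[0,T^\ast+1]$ with this $[a,b]$, and then restrict the resulting $L^2$ convergence $\hat g^i\to\bar g$ from $\Sigma\times[0,T^\ast+1]$ down to $\Sigma\times[0,T]$.

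The Hawking-mass part of the hypothesis is immediate from Geroch monotonicity together with $m_H(\Sigma_0^i)\ge 0$: for every $t\in[0,\infty)$ one has $0\le m_H(\Sigma_t^i)\le m_H(\Sigma_\infty^i)\to 0$, so in particular $m_H(\Sigma_{T^\ast+1}^i)\to 0$, and the inclusion $U_{T^\ast+1,i}\subset\mathcal{M}_{r_0,H_0^{T^\ast+1},I_0}^{T^\ast+1,H_1,A_1}$ is part of the present assumptions. To verify the remaining bounds on $[a,b]$, I would use the compatibility condition \eqref{rtBothBigAssumption} to conclude that, for $t\in[T^\ast,T^\ast+1]$, the leaf $\Sigma_t^i$ is contained in $\{r\ge C_1 T^\ast\}$, which can be pushed arbitrarily deep into the asymptotically hyperbolic region by taking $T^\ast$ large. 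The decay \eqref{DerMetCond} then gives the pointwise estimate
\begin{align}
|E|+|\nabla E|+|\nabla^2 E|+|\nabla^3 E|\le C e^{-\alpha C_1 T^\ast}
\end{align}
uniformly in $i$. Since the Ricci tensor and its first covariant derivative are polynomial expressions in the metric, its inverse, and its first three derivatives, while $\mathrm{Rc}^{g_{\Hy}}=-2 g_{\Hy}$, this yields a uniform pointwise, hence $W^{1,2}(\Sigma\times[a,b])$, bound on $Rc^i(\nu,\nu)+2$. The graphicality condition \eqref{UniformGraphAssumption} expresses $\Sigma_t^i$ as a $C_3$-Lipschitz graph over a fixed coordinate sphere and, together with the decay of $E$, furnishes the uniform diameter bound $\mathrm{diam}(\Sigma_t^i)\le D$ for $t\in[a,b]$.

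With both bullet points of Corollary \ref{End1} verified on $\Sigma\times[0,T^\ast+1]$, I would invoke that corollary to obtain $\hat g^i\to\bar g$ in $L^2(\Sigma\times[0,T^\ast+1],\bar g)$ and restrict to $\Sigma\times[0,T]$. The step I expect to be the main obstacle is the faithful translation of the asymptotically hyperbolic decay, which is stated with respect to the ambient radial coordinate $r$ and the flat derivative, into uniform geometric estimates on the IMCF hypersurfaces in the area-preserving parameterization from Proposition \ref{avgH}: one must relate the IMCF normal $\nu$ and the pullback tensors on $\Sigma\times[a,b]$ to the hyperbolic radial direction and to $\delta$, and track how \eqref{UniformGraphAssumption} controls both pointwise values and first derivatives of $Rc^i(\nu,\nu)+2$ after this change of parameterization. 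Once these coordinate-transfer estimates are in place, the reduction to Corollary \ref{End1} is clean and the proof is complete.
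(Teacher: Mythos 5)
Your proposal is correct and follows essentially the same route as the paper: both arguments use the compatibility condition \eqref{rtBothBigAssumption} to push a sub-interval $[a,b]$ of flow times deep into the asymptotically hyperbolic end, invoke the decay \eqref{DerMetCond} to bound $\|Rc^i(\nu,\nu)\|_{W^{1,2}(\Sigma\times[a,b])}$, use the graph condition \eqref{UniformGraphAssumption} to bound $\mathrm{diam}(\Sigma_t^i)$, and then reduce to Corollary \ref{End1}. The only cosmetic difference is your choice of $[a,b]=[T^\ast,T^\ast+1]$ with $T^\ast>T$ (extending the time interval and then restricting) versus the paper's choice of $[a,b]=[T-\epsilon,T]$ for $T\ge T_\ast$ (applying directly for large $T$ and implicitly restricting for small $T$); both also share the same unresolved technical step, namely spelling out the transfer of the decay estimates from the ambient $r$-coordinate to the IMCF parameterization, which the paper likewise asserts rather than proves in detail.
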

\begin{proof}
By the assumption that $M_j$ are uniformly asymptotically hyperbolic and uniformly compatible with the IMCF coordinates we can use \eqref{rtBothBigAssumption} of Definition \ref{IMCFcompatibleCoordsDef} to ensure that if we choose $t$ large enough then $r(x,t)$ will also be large. Then there exists a $T_*, \epsilon > 0$ so that for all $T \ge T_*$ we can use $\eqref{DerMetCond}$ of Definition \ref{AH} to show 
$\|Rc^i(\nu,\nu)\|_{W^{1,2}(\Sigma\times [T-\epsilon,T])} \le C$. 

Then the graph assumption \eqref{UniformGraphAssumption} of Definition \ref{IMCFcompatibleCoordsDef} implies that the metric on $\Sigma_t$ can be written as,
\begin{align}
g_{ij} &= f^2\left(\sigma_{ij} + f_if_j\right).
\end{align}
Now we let $\gamma:[0,1]\rightarrow S^2$ be the curve realizing the diameter of $g$ where $\gamma(0) = p$, $\gamma(1) = q$, $p,q \in S^2$ and we let $\alpha: [0,L]\rightarrow S^2$ be the unit speed geodesic which realizes the distance between $p$ and $q$ with respect to $\sigma$, i.e. $\alpha(0)=p$, $\alpha(L)=q$ and $d_{\sigma}(p,q) = L_{\sigma}(\alpha)$. Now we calculate,
\begin{align}
Diam(\Sigma_t) &=d_g(p,q)
\\&= L_g(\gamma) 
\\&\le L_g(\alpha)
\\&=\int_0^L \sqrt{g(\alpha',\alpha')}dt
\\&=\int_0^L\sqrt{f^2\sigma(\alpha',\alpha') + f^2 \sigma(\nabla^{S^2}f,\alpha')^2}dt
\\&\le \int_0^Lf \sqrt{\sigma(\alpha',\alpha')}dt + \int_0^L f|\sigma(\nabla^{S^2}f,\alpha')|dt 
\\&\le r_2 L + L r_2 |\nabla^{S^2}f| \le 2C_2 t \text{ Diam}(S^2) (1+C_3),
\end{align}
where we used the assumptions of Definition \ref{IMCFcompatibleCoordsDef} in the last line of the calculation.

Now we can apply the results of Corollary \ref{End1} for each fixed $T \ge T_*$ to finish the proof.
\end{proof}

\begin{Cor}\label{RPICOR2}
Assume that for all $M_i$ the smooth solution of IMCF starting at $\Sigma_0$ exists for all time, so that $U_{T,i}\subset \mathcal{M}_{r_0,H_0^T,I_0}^{T,H_1,A_1}$ for all $T \in (0, \infty)$. In addition, assume that $m_H(\Sigma_{\infty}^i)- m_H(\Sigma_{0}^i) \rightarrow 0$, $m_H(\Sigma_{\infty}^i)\rightarrow m > 0$ as $i \rightarrow \infty$, and $M_i$ are uniformly asymptotically hyperbolic with respect to the IMCF coordinates then 
\begin{align}
\hat{g}^i \rightarrow g_{AdSS}
\end{align}
on $\Sigma\times[0,T]$ in $L^2$ with respect to $g_{AdSS}$.
\end{Cor}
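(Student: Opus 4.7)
The plan is to follow the strategy of Corollary \ref{PMTCOR2} essentially verbatim, substituting Corollary \ref{End2} for Corollary \ref{End1} at the last step. The mass assumptions are the only input that changes, and they are precisely the hypotheses on the Hawking mass that Corollary \ref{End2} is designed to handle.

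First I would verify the integral Ricci hypothesis needed to apply Corollary \ref{End2}. By compatibility of the IMCF coordinates with the asymptotically hyperbolic structure, condition \eqref{rtBothBigAssumption} gives $r(x,t) \ge C_1 t$, so the radial coordinate on $\Sigma_t$ is large whenever $t$ is large. Combining this with the uniform pointwise decay \eqref{DerMetCond} of the error tensor $E = g^i - g_{\Hy}$, whose constant $C$ is uniform in $i$ by uniform asymptotic hyperbolicity, produces $T_{\ast}, \epsilon > 0$ such that $\|Rc^i(\nu,\nu)\|_{W^{1,2}(\Sigma\times [T-\epsilon,T])} \le C$ for all $T \ge T_{\ast}$ and all $i$. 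Next I would verify the uniform diameter bound on $\Sigma_t$ for $t\in [T-\epsilon,T]$. Using \eqref{UniformGraphAssumption}, the induced metric has the form $g^i_{jk} = f^2(\sigma_{jk} + f_j f_k)$, and the curve-length calculation from Corollary \ref{PMTCOR2} transplants verbatim to give $\mathrm{diam}(\Sigma_t^i) \le 2 C_2 t\, \mathrm{Diam}(S^2)(1+C_3)$, which is bounded on $[T-\epsilon, T]$ uniformly in $i$.

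With those two hypotheses verified, the remaining task is to feed the mass assumptions of Corollary \ref{RPICOR2} into those of Corollary \ref{End2}. Geroch monotonicity along the IMCF gives
\begin{align*}
m_H(\Sigma_0^i) \le m_H(\Sigma_T^i) \le m_H(\Sigma_{\infty}^i),
\end{align*}
and since both endpoints tend to $m$ by hypothesis, so does $m_H(\Sigma_T^i)$ for each fixed $T$, and similarly $m_H(\Sigma_T^i)-m_H(\Sigma_0^i) \to 0$. So for each fixed $T \ge T_{\ast}$ the sequence $U_{T,i}$ satisfies the hypotheses of Corollary \ref{End2}, and we conclude $\hat{g}^i \to g_{AdSS}$ in $L^2$ with respect to $g_{AdSS}$ on $\Sigma \times [0,T]$. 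For $T < T_{\ast}$ the convergence on $\Sigma\times[0,T]$ follows by restriction.

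The main obstacle is really just organizational: the analytic work has already been done in Corollary \ref{End2}, and the geometric content here is the sandwich argument that transfers the asymptotic mass assumptions to the finite-$T$ assumptions through Geroch monotonicity, together with the verification that the asymptotically hyperbolic decay and the graph condition translate into the integral Ricci bound and the diameter bound on some interval $[T-\epsilon,T]$ with $T$ taken sufficiently far out in the end.
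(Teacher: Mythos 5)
Your proposal is correct and follows essentially the same approach as the paper: the paper's proof of Corollary \ref{RPICOR2} is a one-line reference to repeating the argument of Corollary \ref{PMTCOR2}, substituting Corollary \ref{End2} for Corollary \ref{End1}, which is exactly your plan. You are slightly more careful than the paper in explicitly spelling out the Geroch-monotonicity sandwich argument that converts the hypotheses $m_H(\Sigma_{\infty}^i) - m_H(\Sigma_0^i) \to 0$ and $m_H(\Sigma_{\infty}^i)\to m$ into the finite-$T$ hypotheses $m_H(\Sigma_T^i) - m_H(\Sigma_0^i) \to 0$ and $m_H(\Sigma_T^i)\to m$ needed by Corollary \ref{End2}; this step is left implicit in the paper but is indeed required, and your handling of it is correct.
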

\begin{proof}
Use the exact same argument as in the proof of Corollary \ref{RPICOR2}.
\end{proof}

\end{document}